
\documentclass[11pt,article,reqno]{amsart}
\usepackage{amsmath, amsthm, amsfonts}

\usepackage[top=25mm,bottom=25mm,left=27mm,right=27mm]{geometry}

\usepackage{color}

\makeatletter
\def\section{\@startsection{section}{1}%
\z@{1\linespacing\@plus\linespacing}{1\linespacing}%
{\bf\centering}}
\def\subsection{\@startsection{subsection}{0}%
\z@{\linespacing\@plus\linespacing}{\linespacing}%
{\bf}}
\def\subsubsection{\@startsection{subsubsection}{0}%
\z@{\linespacing\@plus\linespacing}{\linespacing}%
{\bf}}
\makeatother

\makeatletter
\@addtoreset{equation}{section}
\makeatother

\usepackage{graphicx}
%
%
%
%
%

\usepackage{amsmath, amsfonts}
\usepackage{color}  
\newtheorem{theorem}{Theorem}[section]

\newtheorem{lemma}[theorem]{Lemma}
\newtheorem{proposition}[theorem]{Proposition}
\theoremstyle{definition}

\theoremstyle{remark}
\newtheorem{remark}[theorem]{Remark}

\def\RR{\mathbb{R}}

\def\PP{\mathbb{P}}
\def\EE{\mathbb{E}}

\def\al{\alpha}
\def\be{\beta}

\def\de{\delta}

\def\ep{\varepsilon}
\def\ph{\varphi}
\def\la{\lambda}
\def\sk{\smallskip}

\def\ds{\displaystyle}
\def\cH{\mathcal{H}} 
\def\rd{\mathrm{d}}  


\newcommand{\bra}[1]{\left\lbrace#1\right\rbrace}
\newcommand{\bkt}[1]{\left[#1\right]}



\def\cM{M} 
\def\rr{\mathbb{R}}
\def\cF{\mathcal{F}}
\def\pp{\PP}
\def\s{\mathbb{S}}
\def\wn{\widetilde{N}}
\def\sm{{s-}}
\def\cL{\mathcal{L}}
\def\indiq{1}
\def\cD{\mathcal{D}}
\def \llb {\left\lbrace}
\def \rrb {\right\rbrace}
\def\cP{\mathcal{P}}
\def \( {\left(}
\def \) {\right)}
\def\nn{\mathbb{N}}
\def\E{\mathbb{E}}

\newenvironment{preuve}{\noindent {\it Proof}}{\hfill$\square$}
\def\e{\varepsilon}
\def \lba {\left|}
\def \rba {\right|}
\def \[ {\left\lbrack}
\def \] {\right\rbrack}
\def\cG{\mathcal{G}}



\begin{document}
\title[Range and graph of stable-like processes]{Hausdorff dimension of the range and the graph of stable-like processes}
\author[X. Yang]{Xiaochuan Yang}

\address{Dept. Statistics \& Probability, Michigan State University, 48824 East Lansing, MI, USA}
\address{Universit\'e Paris-Est, LAMA(UMR8050), UPEMLV, UPEC, CNRS, F-94010 Cr\'eteil, France.}

\email{yangxi43@stt.msu.edu, xiaochuan.j.yang@gmail.com}

\thanks{\emph{Key-words}: Markov processes, L\'evy processes, Hausdorff dimension.
 \\ \medskip
\noindent
2010 {\it MS Classification}:  60H10 \and 60J25 \and 60J75 \and 28A78. \\
\noindent
}

\begin{abstract}
We determine the Hausdorff dimension for the range of a class of pure jump Markov processes in $\mathbb{R}^d$, which turns out to be random and depends on the trajectories of these processes. The key argument is carried out through the SDE representation of these processes.  The method developed here also allows to compute the Hausdorff dimension for the graph.
\end{abstract}

\maketitle

\baselineskip 0.5 cm

\bigskip \medskip

\section{Introduction}

The range of various stochastic processes provides interesting examples of random fractals.  The determination of their Hausdorff dimension is a natural question.  
For L\'evy processes, this question has been addressed by different authors, see for instance Taylor \cite{taylor1953},  Blumenthal and Getoor \cite{blumenthal1960stable}, McKean \cite{mckean1955stable}, Pruitt \cite{pruitt1969range}, Khoshnevisan, Xiao and Zhong \cite{khoshnevisan2003range}, and the survey article by Xiao \cite{xiao2004survey} for an exhaustive list of literature on this topic. In particular, Pruitt \cite{pruitt1969range} characterized the Hausdorff dimension for the range of general L\'evy processes in terms of their potential operators, while  Khoshnevisan, Xiao and Zhong \cite{khoshnevisan2003range} measured their range in terms of the characteristic exponent.

Recently, there has been much interest in understanding Markov processes generated by pseudo-differential operators, we refer the readers to the monograph by Jacob \cite{jacob2005vol3} and a recent survey book by B\"ottcher, Schilling and Wang \cite{bottcher2013survey} (we adopt the terminology "L\'evy-type processes" therein). These processes are usually spatially inhomogeneous which is an important feature because real life data (e.g. financial, geographical and meteorologic data) which have been modeled by L\'evy processes often exhibit different characteristics in different locations. Therefore, modeling with L\'evy-type processes can be relevant.

 The determination of the Hausdorff dimension of L\'evy-type processes, contrary to the L\'evy case, seems to be far from being accomplished. In particular, only upper (see Schilling \cite{schilling1998rangefeller}) and lower bounds (see Knopova, Schilling and Wang \cite{knopova2015range}) are known under various conditions and these bounds do not match in general.

The purpose of this article is to find the exact Hausdorff dimension of the sample paths of a specific class of L\'evy-type processes in $\RR^d$, called stable-like processes, whose generator can be written for all $f\in C^2_c(\RR^d)$, twice continuously differentiable functions with compact support, 
\begin{equation}\label{eq: generator}
\mathcal{L}^{{\be}} f(x) = \int \bkt{ f(x+u)-f(x)- 1_{|u|\le 1} u \cdot \nabla f(x) } {\be(x)|u|^{-d-\be(x)}\rd u},
\end{equation}
where $|\cdot|$ is the Euclidean norm in $\RR^d$ and $\be$ is a Lipschitz map from $\RR^d$ to a compact subset of $(0,2)$. {This function $\be$ is the key which gives all the information on dimensional properties of stable-like processes.}   The uniqueness in law of a Markov process with generator \eqref{eq: generator} was proved by Bass \cite[page 274]{bass1988uniqueness}. Actually, Bass showed the uniqueness in law for a large class of L\'evy-type operators under quite weak regularity conditions on the jump kernel (Dini-continuity for $\be$ in the stable-like case). Moreover, stable-like processes are Feller processes \cite[page 285]{bass1988uniqueness}, so the strong Markov property holds.  Here, we assume stronger regularity condition (Lipschitz continuity) because we will use a jump SDE representation of stable-like processes (especially the pathwise uniqueness). We refer the readers to the monograph by Kolokoltsov  \cite[Chapter 7]{kolokoltsov2011book} for more on  distributional properties of stable-like processes, e.g. heat kernel estimates. 

\medskip
The main result of this paper is the following. 

\begin{theorem}\label{theorange} Let $\cM$ be a stable-like process in $\rr^d$, that is a Markov process with generator \eqref{eq: generator}. Then a.s. for every open interval $I= (a,b)\subset \RR^+$, $$\dim_\cH \big(\cM(I)\big) = d\wedge \sup_{s\in I}\be(\cM_s).$$
Here and after, $\dim_\cH E$ denotes the Hausdorff dimension of the set $E$.
\end{theorem}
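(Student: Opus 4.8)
The plan is to prove the formula $\dim_\cH(\cM(I)) = d \wedge \sup_{s\in I}\be(\cM_s)$ by establishing matching upper and lower bounds almost surely, and the key tool throughout will be the jump SDE representation of $\cM$ together with its pathwise uniqueness. Let me think about what the right value is and why.

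The range of a stable process of index $\al$ in $\RR^d$ has dimension $\al \wedge d$. Here the index $\be(x)$ varies with the location. Intuitively, over a time interval $I$, the process spends time at various locations $\cM_s$, and its local behavior near a point $x$ is stable-like with index $\be(x)$. The "roughest" behavior (largest jumps relative to time, hence largest dimension) occurs where $\be$ is largest. So the range dimension should be governed by $\sup_{s\in I}\be(\cM_s)$, capped at $d$. This is the random quantity depending on the trajectory, which the abstract emphasized.

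So the structure is: prove $\dim_\cH(\cM(I)) \le d \wedge \sup_{s\in I}\be(\cM_s)$ and $\dim_\cH(\cM(I)) \ge d \wedge \sup_{s\in I}\be(\cM_s)$, both almost surely, and the tricky part is getting "a.s. for every $I$" simultaneously (uncountably many intervals). Let me think about each bound.

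For the upper bound: I'd want to show that locally, where $\be(\cM_s) \le \ga$ for all $s$ in a small time window, the range has dimension at most $\ga$. The idea: partition time finely, and on each small time interval the process, while it stays in a region where $\be \le \ga$, behaves like (can be dominated/coupled with) a stable process of index close to $\ga$. A stable-like increment over time $h$ has size roughly $h^{1/\be}$, so with $\be \le \ga$ the increments are dominated appropriately. Then the standard box-counting / covering argument for the range of a $\ga$-stable process gives the upper bound $\ga \wedge d$. Taking $\ga$ down to $\sup_s \be(\cM_s)$ gives the upper bound. The subtlety is that $\sup_s\be(\cM_s)$ is random and path-dependent, so I'd use the continuity of $s\mapsto \cM_s$ along the jump times... actually $\cM$ is a jump process, not continuous, so $s\mapsto\be(\cM_s)$ is only right-continuous with left limits; I'd need the supremum to be achieved or approximated by the occupied region of the path. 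Using the SDE I can couple the process on short intervals with genuine stable processes and transfer the known stable estimates.

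For the lower bound: the standard route is the energy method (potential-theoretic / Frostman). To show $\dim_\cH(\cM(I)) \ge \ga$ it suffices to find, for every $\ga' < \ga$, a nonzero measure on $\cM(I)$ with finite $\ga'$-energy, i.e. to bound $\EE \int\int |\cM_s - \cM_t|^{-\ga'}\,\rd s\,\rd t < \infty$ via occupation measures, using heat kernel / transition density estimates for stable-like processes (available from Kolokoltsov's monograph cited in the excerpt). Specifically, if at some time the process is near a point $x_0$ where $\be(x_0)$ is close to the supremum, then restricting attention to a short interval where the process stays near $x_0$, the process looks like a $\be(x_0)$-stable process, whose range has dimension $\be(x_0)\wedge d$; by the Markov property and the density bounds one gets the matching lower bound on that sub-range, hence on the whole range $\cM(I)$. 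The main obstacle, I expect, is precisely this lower bound: controlling the negative moments $\EE|\cM_s-\cM_t|^{-\ga'}$ uniformly requires sharp two-sided heat kernel estimates for the inhomogeneous process, and localizing the argument near the (random) location of the supremum of $\be$ along the path while keeping everything measurable and valid simultaneously for all intervals $I$.

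Finally, to upgrade "a.s. for each fixed $I$" to "a.s. for every $I$": I'd first prove both bounds for all rational intervals simultaneously (a countable family, so a single null set suffices), then extend to arbitrary open $I$ by monotonicity of $\dim_\cH$ and of $\sup_{s\in I}\be(\cM_s)$ in $I$, approximating a general $(a,b)$ from inside and outside by rational intervals and using right-continuity of the paths to match the two sides. The pathwise uniqueness of the SDE is what lets me do the coupling comparisons with honest stable processes rigorously; I expect the delicate bookkeeping to be the localization in the lower-bound energy estimate and ensuring the exceptional null set does not depend on $I$.
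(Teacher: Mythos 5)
Your skeleton is right — matching upper and lower bounds, localization via the Markov property at times $t_0$ where $\be(\cM_{t_0})$ is near the supremum, reduction to countably many rational intervals, and an appeal to pathwise uniqueness for a coupling — but the central technical device in both of your bounds does not work as stated, and the device that replaces it is exactly what the paper's proof is built on. You propose to couple (or dominate) $\cM$ with \emph{honest constant-index stable processes}. Pathwise, this is impossible: driven by the same Poisson random measure, the SDE coefficients $\theta r^{1/\be(x)}$ and $\theta r^{1/\ga}$ differ at every $x$ with $\be(x)\neq\ga$, so the Gronwall/pathwise-uniqueness argument yields no agreement even on short time intervals; nor is there pathwise domination of increments (the compensators differ, and for $d\ge 2$ the jump vectors are not ordered). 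What the paper actually does is couple $\cM$ with auxiliary \emph{stable-like} processes whose index function is a truncation of $\be$: for the lower bound, $\be_a(\cdot)=\be(\cdot)\vee a$ with rational $a<\be(x)$, solved against the \emph{same} Poisson measure. Since the coefficients of the two SDEs coincide on the region the path occupies before the stopping time $\tau=\min(\tau_x,\tau_{x,a},\tau_{\ge 1})/2$, Gronwall gives exact agreement $\cM_{t\wedge\tau}=\cM^{x,a}_{t\wedge\tau}$, and then the \emph{deterministic} lower bound of Knopova--Schilling--Wang applies to $\cM^{x,a}$ because its index is bounded below by $a$ everywhere, giving the heat kernel bound $p(t,x,y)\le Ct^{-d/a}$. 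This is precisely the resolution of the obstacle you flag and leave open: your energy/capacity computation with the only available on-diagonal bound $p(t,x,y)\le Ct^{-d/\underline{\be}}$, $\underline{\be}=\inf_{x\in\RR^d}\be(x)$, produces only the deterministic bound $\underline{\be}\wedge d$, not the random one; and ``restricting attention to a short interval where the process stays near $x_0$'' is not a licit substitute, since conditioning on that event destroys the Markov structure under which the heat kernel estimates hold.

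The upper bound in your proposal has the same unproved coupling at its core, plus a covering argument that would require uniform increment estimates for the inhomogeneous process which you never establish. The paper's upper bound is an entirely different mechanism: slice the SDE as $\cM=\cM_0+\sum_{k=0}^{m-1}\cM^{k,m}+\cM^{\ge 1}$, where $\cM^{k,m}$ collects the compensated jumps occurring while $\be(\cM_{s-})\in[\tfrac{2k}{m},\tfrac{2k+2}{m})$; show via L\'epingle's theorem on $p$-variation of semimartingales that each slice has finite $\tfrac{2k+3}{m}$-variation (the slices whose index window exceeds $\be^*_M=\sup_{t}\be(\cM_t)$ are identically zero, which is how the random bound enters); conclude that $W_p(\cM,[0,1])<\infty$ a.s.\ for every $p>\be^*_M$, and finish with McKean's lemma $\dim_\cH f([0,1])\le p\wedge d$ for functions of finite $p$-variation. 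A repaired version of your localization (coupling with the \emph{capped} index function $\be\wedge\ga$ on the event $\{\sup_{s\in J}\be(\cM_s)<\ga\}$, then citing Schilling's deterministic upper bound) could also work, but it needs agreement of the two processes on the whole interval $J$ — not merely up to a positive stopping time — which requires iterating the coupling across the large jumps; as written, your proposal contains neither this fix nor the $p$-variation alternative.
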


Let us comment the proof. To get the upper bound, we combine the classical variation methods with the "slicing" technique introduced in \cite{yang2015jumpdiffusion}, which  allows to distinguish different local behavior of $\cM$, see Section 3. To get the lower bound, the strategy is to couple our process with a family of other stable-like processes whose Hausdorff dimension {is} known, then compare the Hausdorff dimension of their sample paths with ours using pathwise uniqueness and the Markov property, see Section 4. 

\sk

With  the same strategy, we are also able to compute the Hausdorff dimension of the graph of stable-like processes. 
\begin{theorem}\label{theograph} Let $\cM$ be a stable-like process as in Theorem \ref{theorange}. Let $Gr_{I}(\cM)= \{ (t,\cM_t) : t\in I\}$ be the graph of $\cM$ on the interval $I \subset\rr^+$.
\begin{enumerate}
\item If $d\ge 2$, then a.s. for every open interval $I\subset\rr^+$, 
$$\dim_\cH \Big(Gr_I(\cM)\Big) = 1 \vee \sup_{t\in I}\be(\cM_t).$$ 
\item If $d=1$, then a.s. for every open interval $I\subset\rr^+$, 
\begin{align}\label{eqinthm1.2}
 \dim_\cH \Big(Gr_I(\cM)\Big) = 1\vee \Big(2-\frac{1}{\sup_{t\in I}\be(\cM_t)}\Big).
\end{align}
\end{enumerate} 
\end{theorem}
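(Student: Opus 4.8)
The plan is to follow the architecture of the proof of Theorem \ref{theorange}, treating the two regimes $d\ge 2$ and $d=1$ separately and splitting each into a lower and an upper bound, while reducing everything to the local behaviour of $\cM$, which near a time $t$ is that of a symmetric $\be(\cM_t)$-stable L\'evy process. For such a stable process $Y$ of index $\al$ in $\rr^d$ the Hausdorff dimension of the graph is classical (see e.g. the survey \cite{xiao2004survey}): it equals $1\vee\al$ when $d\ge 2$ and $1\vee(2-1/\al)$ when $d=1$, so the two formulas we must prove are exactly the suprema of these local exponents along the trajectory. The lower bound when $d\ge 2$ is then essentially free: the projection $\rr^{1+d}\to\rr^d$ onto the spatial coordinates is $1$-Lipschitz and maps $Gr_I(\cM)$ onto the range $\cM(I)$, so $\dim_\cH Gr_I(\cM)\ge \dim_\cH\cM(I)=\sup_{s\in I}\be(\cM_s)$ by Theorem \ref{theorange} (using $\sup\be<2\le d$), while the projection onto the time axis gives $\dim_\cH Gr_I(\cM)\ge 1$; combining yields $\dim_\cH Gr_I(\cM)\ge 1\vee\sup_{t\in I}\be(\cM_t)$.

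For the upper bounds in both regimes I would run a single covering computation, the only input changing between $d\ge 2$ and $d=1$ being the local dimension of the range. Fix $\al\ge\sup_{t\in I}\be(\cM_t)$ and a covering scale $r$, partition $I$ into $\sim 1/r$ time intervals $J$ of length $r$, and cover $Gr_J(\cM)$ by cubes of side $r$ in $\rr^{1+d}$. Away from the (countably many) large jumps, the modulus of continuity of stable-like processes — which should follow from the SDE representation together with the heat-kernel bounds of Kolokoltsov \cite{kolokoltsov2011book} — controls the spatial oscillation of $\cM$ over $J$ by a quantity of order $r^{1/\al}$ up to an arbitrarily small loss in the exponent. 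When $\al>1$ one has $r^{1/\al}>r$, so the time extent of $J$ is absorbed and covering $Gr_J(\cM)$ reduces to covering the local range $\cM(J)$, a set of diameter of order $r^{1/\al}$ and dimension $m:=\al\wedge d$; this needs of order $(r^{1/\al}/r)^{m}=r^{m(1-\al)/\al}$ cubes. Summing over the $\sim 1/r$ intervals gives a total count of order $r^{-1+m(1-\al)/\al}$, hence a box (and Hausdorff) dimension $1+m(\al-1)/\al$, which equals $\al$ when $d\ge 2$ and $2-1/\al$ when $d=1$. When $\al\le 1$ the oscillation $r^{1/\al}$ is smaller than $r$, each $Gr_J(\cM)$ fits in one mesh cube, the count is of order $1/r$, and the dimension is $1$. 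A slicing decomposition of $I$ along the level sets of $t\mapsto\be(\cM_t)$, as in the proof of Theorem \ref{theorange}, then lets one take $\al=\sup_{t\in I}\be(\cM_t)$ and upgrade these box counts into the desired a.s. upper bound simultaneously for all subintervals.

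It remains to prove the lower bound when $d=1$, where $1\vee(2-1/\sup\be)$ may strictly exceed $1\vee\dim_\cH\cM(I)$, so the projection argument no longer suffices. Here I would localise: given $\ep>0$, right-continuity of $t\mapsto\be(\cM_t)$ produces a random subinterval $J=[t_0,t_0+\eta]\subset I$ on which $\be(\cM_t)\ge\sup_{t\in I}\be(\cM_t)-\ep=:\al-\ep$. Restarting at $t_0$ by the strong Markov property and using the jump SDE together with its pathwise uniqueness, I would couple $\cM$ on $J$ with a symmetric $(\al-\ep)$-stable L\'evy process $Y$ so that the two paths coincide on a further small random subinterval, exactly as in the lower bound of Theorem \ref{theorange}. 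Since the graph dimension of the one-dimensional process $Y$ is $2-1/(\al-\ep)$, and the Hausdorff dimension of a graph is determined by arbitrarily short time windows, this transfers the bound $\dim_\cH Gr_I(\cM)\ge 2-1/(\al-\ep)$; letting $\ep\to 0$ and recalling the trivial bound $\ge 1$ gives $\dim_\cH Gr_I(\cM)\ge 1\vee(2-1/\sup_{t\in I}\be(\cM_t))$.

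The step I expect to be the \emph{main obstacle} is the uniform modulus-of-continuity estimate underlying every upper bound: one needs, almost surely and simultaneously for all small $r$ and all length-$r$ time intervals $J$, a bound of order $r^{1/\al-o(1)}$ on the spatial oscillation of $\cM$ over $J$ in terms of $\al=\sup_{t\in J}\be(\cM_t)$, with the large jumps removed and shown to contribute only countably many points to the graph. Making this compatible with the slicing decomposition — so that the fluctuating index $\be(\cM_t)$ can legitimately be replaced by its supremum on each slice — is the delicate point, and in the regime $d=1$ the covering must moreover be carried out at precisely the mesh $r$ matching the time scale, since the exponent $2-1/\al$ mixes the time and space scales nonlinearly. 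The lower bound in $d=1$ also requires the coupling to preserve the joint time–space structure of the path, not merely its range, which is a finer demand than in Theorem \ref{theorange}.
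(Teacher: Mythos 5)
Your lower bound for $d\ge 2$ (projections of the graph onto the time axis and onto the space axis, combined with Theorem \ref{theorange}) is exactly the paper's argument. The other two main steps, however, each contain a genuine gap. For the upper bounds, the oscillation estimate you rely on is false: a stable-like process admits \emph{no} almost sure uniform modulus of continuity of order $r^{1/\al-o(1)}$ over all time intervals of length $r$, even after deleting large jumps. Indeed, any interval containing a jump of size $u$ has oscillation at least $u$; almost surely the path has jumps of some fixed positive size, so at \emph{every} covering scale $r$ some length-$r$ interval has oscillation bounded below by a constant, and after removing the finitely many jumps larger than any fixed $\de>0$ the same problem recurs at size $\de\gg r^{1/\al}$. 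The only way out is a scale-dependent truncation (jumps larger than $r^{1/\al-\ep}$, whose number grows like $r^{\al\ep-1}$ as $r\to 0$, each creating graph pieces that must be counted separately), and no such argument appears in your proposal --- you flag this point as the main obstacle but do not resolve it. The paper never needs a per-interval bound: in $d=1$ it controls the \emph{sum} of the oscillations by H\"older's inequality, $\sum_k Osc(\cM,I_{j,k})\le \big(\sum_k Osc(\cM,I_{j,k})^p\big)^{1/p}2^{j(1-1/p)}\le W_p(\cM,[0,1])^{1/p}\,2^{j(1-1/p)}$, together with the finiteness of the $p$-variation for every $p>\be^*_M$ (Lemma \ref{lemmaglobalvar}, via L\'epingle's theorem and slicing), a bound that is completely insensitive to heavy-tailed individual oscillations; in $d\ge 2$ it is simpler still, applying McKean's lemma \eqref{variation2dimension} directly to the graph process $t\mapsto(t,\cM_t)$, which has finite $p$-variation for every $p>\max(1,\be^*_M)$ --- this also avoids your implicit need for box-counting (not merely Hausdorff-dimension) estimates of the local range $\cM(J)$ at scale $r$.

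The second gap is in your lower bound for $d=1$, and this one you did not flag: you cannot couple $\cM$ with a \emph{constant-index} symmetric $(\al-\ep)$-stable L\'evy process so that the two paths coincide on a nontrivial time interval. Pathwise uniqueness identifies two solutions driven by the same Poisson measure only while their coefficients agree along the common path; this is precisely why the paper's coupling \eqref{coupling} uses the \emph{variable} index function $\be\vee a$, which coincides with $\be$ wherever $\be>a$, in particular near the starting point, so that Gronwall's lemma gives $\cM=\cM^{x,a}$ up to a positive stopping time. With a constant index $\al-\ep$, the coefficients $r^{1/\be(\cM_{s-})}$ and $r^{1/(\al-\ep)}$ differ already at $s=0$ (unless $\be(x)=\al-\ep$), the Gronwall estimate yields no identification, and the paths separate instantly. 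Consequently the classical Blumenthal--Getoor and Jain--Pruitt results on graphs of stable L\'evy processes cannot be imported; what is required is a graph-dimension lower bound valid for stable-like processes with \emph{variable} index bounded below by $a$, namely the paper's Proposition \ref{lemmalowergraph}, $\dim_\cH\big(Gr_{[0,1]}(\cM)\big)\ge 1\vee\big(2-1/\inf_x\be(x)\big)$, which is proved by adapting the Pruitt--Taylor sojourn-time method: an exponential moment bound for the sojourn time $T_{t_0}(a,s)$ uniform in the starting point, Borel--Cantelli, and Taylor's density lemma applied to the occupation measure of the graph. This is the technical heart of the paper's Section 5, and your proposal contains no substitute for it.
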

This theorem generalizes classical results \cite{blumenthal1962zeroANDgraph,jain1968graph} on the Hausdorff dimension for the graph of $\al$-stable processes in $\RR^d$. 
Historically, Blumenthal and Getoor \cite{blumenthal1962zeroANDgraph} treated the recurrent case  ($d=1$ and $\al>1$), and Jain and Pruitt \cite{jain1968graph} the transient case ($\al<d$).
Later, Pruitt and Taylor \cite{pruitt1969stablecomponents} investigated, among others things, the asymptotic behavior of the sojourn time of a L\'evy process with stable components and related the exact Hausdorff measure of the graph of such process to these results. We follow and adapt, when necessary, the arguments of Pruitt and Taylor  \cite{pruitt1969stablecomponents}. 

This paper is organized as follows. We first recall some basic properties of the  stable-like processes in Section 2. We study the $p$-variation of $\cM$ in Section 3 to yield the upper bound for the dimension of the range of stable-like processes. The lower bound is proved in Section 4 using a coupling argument. Finally, we deal with the dimension of the graph of $\cM$ (Theorem \ref{theograph}) in Section 5.

In the whole paper, $C$ is a positive finite constant independent of the problem, that may change from line to line.

From now on, we only consider the time interval $[0,1]$, extension to any interval is straightforward.


\section{Preliminaries}

First let us introduce the SDE with jumps associated with stable-like processes.  Let  $(\Omega, \cF,(\cF_t), \pp)$ be  a filtered probability space satisfying the usual conditions. Let $\la$ be the Lebesgue measure on $\rr^+$, $H$ be the uniform probability measure on $\mathbb{S}^{d-1}$ and $\pi(dr)= r^{-2}\rd r$ on $\rr^+$.  Denote by $N$ a Poisson random measure on the product space $\rr^+\times\s^{d-1}\times\rr^+$ adapted to the filtration $(\cF_t)$ and with intensity $\la\otimes H\otimes\pi$. We denote by $\wn$ the corresponding compensated Poisson measure.

\begin{proposition}\label{proexistence} Let $\be$ be as in \eqref{eq: generator}. For every $\cF_0$-measurable random variable $\cM_0$, there exists a unique pathwise solution to the stochastic differential equation,
\begin{multline}\label{theequation} \cM_t = \cM_0 + \int_0^t\int_{S^{d-1}}\int_0^1 \theta r^{1/\be(\cM_\sm)} \wn(ds,d\theta,\rd r) \\ 
+ \int_0^t \int_{\s^d} \int_1^{+\infty} \theta r^{1/\be(\cM_{s-})} N(\rd s,\rd \theta, \rd r).
\end{multline}
Furthermore, the solution to \eqref{theequation} is a c\`adl\`ag $(\cF_t)$-adapted Feller  process whose generator is $\cL^\be$ in \eqref{eq: generator}.  
\end{proposition}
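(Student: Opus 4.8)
The plan is to build the pathwise solution by interlacing the large jumps with a solution of the small-jump equation, and then to identify the generator through It\^o's formula. Since the intensity of the large-jump part over $r\in(1,\infty)$ is $\int_1^\infty r^{-2}\,\rd r=1$ per unit time, the Poisson measure $N$ charges only finitely many points of $[0,1]\times\s^{d-1}\times(1,\infty)$ almost surely, so the second integral in \eqref{theequation} is a finite sum. These jumps can be inserted one at a time: at each large-jump epoch $s$ one adds the increment $\theta\, r^{1/\be(\cM_\sm)}$ computed from the pre-jump position, which is a.s.\ finite for finite $r$ and needs no integrability. This is precisely why the large jumps are left uncompensated, as the mean jump size $\int_1^\infty r^{1/\be-2}\,\rd r$ need not be finite when $\be\le1$.

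The core is therefore the compensated small-jump equation
$$\cM_t=\cM_0+\int_0^t\int_{\s^{d-1}}\int_0^1 \theta\, r^{1/\be(\cM_\sm)}\,\wn(\rd s,\rd\theta,\rd r),$$
which I would solve on $[0,1]$ by a Picard fixed-point argument in the space of $L^2$ c\`adl\`ag adapted processes. Two estimates, both uniform in $x$ because $\be$ is valued in a compact subset of $(0,2)$ and hence bounded away from $0$ and $2$, make this work. First, square-integrability of the coefficient: $\int_0^1 r^{2/\be(x)}\,\pi(\rd r)=\int_0^1 r^{2/\be(x)-2}\,\rd r<\infty$, since $\be(x)<2$ forces the exponent $2/\be(x)-2>-1$. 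Second, a Lipschitz estimate
$$\int_{\s^{d-1}}\int_0^1\big|\theta\, r^{1/\be(x)}-\theta\, r^{1/\be(y)}\big|^2\,H(\rd\theta)\,\pi(\rd r)\le C\,\abs{x-y}^2,$$
which I would derive from $\big|r^{1/\be(x)}-r^{1/\be(y)}\big|\le \big|1/\be(x)-1/\be(y)\big|\,\sup_a|\ln r|\,r^{a}$, the Lipschitz continuity of $1/\be$ (as $\be$ is Lipschitz and bounded below), and the convergence of $\int_0^1 |\ln r|^2 r^{2a-2}\,\rd r$ uniformly for $a=1/\be(\cdot)>1/2$. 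With these bounds the It\^o isometry for compensated Poisson integrals makes the Picard map a contraction on short time intervals, yielding existence and pathwise uniqueness, and a Gronwall-type iteration extends the solution to all of $[0,1]$ with no explosion. Splicing in the finitely many large jumps gives the unique pathwise solution of \eqref{theequation}.

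To identify the generator I would apply the It\^o formula for Poisson-driven SDEs to $f(\cM_t)$ with $f\in C^2_c(\rr^d)$ and perform the polar change of variables $u=\theta\, r^{1/\be(x)}$, i.e.\ $\rho=\abs{u}=r^{1/\be(x)}$, under which $\pi(\rd r)=r^{-2}\,\rd r$ pushes forward to $\be(x)\rho^{-\be(x)-1}\,\rd\rho$; combined with the uniform law $H$ on $\s^{d-1}$ this reproduces the kernel $\be(x)\abs{u}^{-d-\be(x)}\,\rd u$ of \eqref{eq: generator}, while the cutoff $r\le1$ matches exactly $\abs{u}\le1$, so the compensator contributes precisely $-1_{\abs{u}\le1}\,u\cdot\nabla f(x)$. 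Dynkin's formula then gives $\E[f(\cM_t)]-f(x)=\E\int_0^t\cL^\be f(\cM_s)\,\rd s$, identifying the generator; the Feller property follows from the $L^2$-continuity of the solution map $\cM_0\mapsto\cM$ obtained in the fixed-point step, or may be quoted from Bass \cite{bass1988uniqueness}. I expect the uniform Lipschitz estimate on $r^{1/\be(x)}$---in particular controlling the logarithmic singularity near $r=0$ using $\be$ bounded strictly below $2$---to be the main technical point.
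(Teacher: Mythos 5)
Your proposal is correct and follows essentially the same route as the paper: reduction to the compensated small-jump equation by interlacing the a.s.\ finitely many large jumps (the paper cites an interlacement result rather than spelling this out), Picard/Gronwall iteration in $L^2$ based on exactly the two coefficient estimates the paper verifies in its Appendix --- uniform square-integrability, and the Lipschitz bound in which the logarithmic factor is absorbed because $\be$ is bounded away from $2$; your mean-value-theorem bound $|r^{1/\be(x)}-r^{1/\be(y)}|\le |1/\be(x)-1/\be(y)|\,|\ln r|\, r^{1/\sup\be}$ is the same computation as the paper's use of $1-e^{-u}\le u$ --- and finally It\^o's formula with the polar change of variables $u=\theta r^{1/\be(x)}$ to produce $\cL^\be$. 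The only divergence is the endgame. The paper concludes by observing that the It\^o computation shows the solution solves the martingale problem for $(\cL^\be, C_c^2(\RR^d))$, then invokes Bass's uniqueness of that martingale problem \cite{bass1988uniqueness} to identify the solution in law with the stable-like process, quoting Bass also for the Feller property. You instead read the generator off Dynkin's formula and propose to get the Feller property from $L^2$-continuity of the solution map. Be careful there: continuity in the initial condition only yields the weak Feller property (preservation of $C_b$); the Feller property in the strict sense (preservation of $C_0$ plus strong continuity) needs an extra argument, and it is also a prerequisite for upgrading the pointwise Dynkin identity to a statement about the generator of the semigroup. Your fallback of quoting Bass does close this, but only through the link the paper makes explicit: the It\^o/Dynkin computation shows your solution solves the martingale problem, and Bass's uniqueness is what transfers his Feller property and generator identification to your process; without naming that uniqueness step, "quote it from Bass" is not yet justified, since Bass's results concern his process, not a priori yours. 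So the substance matches the paper; just make that last logical link explicit.
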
  

\begin{remark} This SDE representation for stable-like processes was first proved in  \cite[page 111]{tsuchiya1992}, we include a proof for completeness.
\end{remark}

\begin{proof}
Let us first consider the well-posedness of the SDE.  By an interlacement procedure for the non compensated Poisson integral in \eqref{theequation} (see \cite[Proposition 2.4]{fu2010positiveSDE}), it is enough to prove that the following SDE has a unique pathwise solution:
\begin{equation}
X_t = X_0 + \int_0^t\int_{S^{d-1}}\int_0^1 \theta r^{1/\be(X_\sm)} \wn(\rd s,\rd \theta,\rd r).
\end{equation}
Classical Picard iteration, Gronwall's lemma and localization procedure entails the existence of a unique pathwise solution once we check the usual (local) Lipschitz continuity and linear growth condition on the coefficients of the SDE, see for instance \cite[Section 3.1]{situ2005}. In other words, it suffices to check that there exists a positive finite constant $C$ such that for all $x,y\in\rr^d$,  
\begin{align*}
 \int_{\s^{d-1}}\int_0^{1} |\theta r^{1/\be(x)}|^2 \, r^{-2}\rd r \,H(\rd\theta) \le C(1+|x|^2), \\
 \int_{\s^{d-1}}\int_0^{1} |\theta r^{1/\be(x)}-\theta r^{1/\be(y)}|^2 \,  r^{-2}\rd r\,H(\rd \theta) \le C|x-y|^2.
\end{align*}
Actually, the first integral is bounded from above uniformly in $x$, that is 
\begin{align}\label{eq: linear growth+}
 \int_{\s^{d-1}}\int_0^{1} |\theta r^{1/\be(x)}|^2 \, r^{-2}\rd r \,H(\rd\theta) \le C
\end{align} 
These conditions are checked in Appendix. 

To prove the second statement, one starts with the observation that
\begin{align*}
\cL^\be f(x)= \int_{\s^{d-1}}\int_{\rr^+} \Big( f(x+\theta r^{1/\be(x)}) -f(x) - \indiq_{0<r<1} r^{1/\be(x)}\theta \cdot\nabla f(x)\Big) \,\frac{\rd r}{r^2} H(\rd \theta),
\end{align*}
where a change of variable $u=\theta r^{1/\be(x)}$ was used for all $x\in\RR^d$ in \eqref{eq: generator}. Applying It\^o's formula to $M$ the solution of \eqref{theequation}, for all $f\in C^2_c(\RR^d)$, 
\begin{multline*}
f(M_t) - f(M_0) - \int_0^t \cL^\be f(M_s)\rd s \\
= \int_0^t\int_{\s^{d-1}}\int_0^1 \Big( f(\cM_{s-}+ \theta r^{1/\be(\cM_{s-})} )-f(\cM_{s-})\Big) \wn(\rd s,\rd \theta,\rd r) \\
+ \int_0^t\int_{\s^{d-1}}\!\!\int_1^{+\infty} \Big( f(\cM_{s-}+ \theta r^{1/\be(\cM_{s-})} )-f(\cM_{s-})\Big) \widetilde N(\rd s,\rd \theta,\rd r).
\end{multline*}  
Applying mean value theorem to $f$ (around zero) and \eqref{eq: linear growth+} for the first compensated Poisson integral, then $||f||_\infty<+\infty$ for the second, one concludes that 
\begin{align*}
f(M_t) - f(M_0) - \int_0^t \cL^\be f(M_s)\rd s
\end{align*}
is a martingale. By the uniqueness of the martingale problem for $(\cL^\be,C_c^2(\RR^d))$ due to Bass \cite{bass1988uniqueness}, one concludes that the solution to \eqref{theequation} is a stable-like process with index function $\be$.  The Feller property was proved in \cite[page 285]{bass1988uniqueness}. The proof is now complete.
\end{proof}

An application of \eqref{eq: linear growth+} and Burkholder-Davis-Gundy's inequality yields the following fact. 

\begin{lemma} The compensated Poisson integral in \eqref{theequation} is a martingale in $L^2(\Omega)$. 
\end{lemma}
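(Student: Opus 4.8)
The plan is to verify that the predictable integrand $g(s,\theta,r)=\theta\,r^{1/\be(\cM_\sm)}$, restricted to the truncation region $\s^{d-1}\times(0,1)$, lies in the class of integrands for which the compensated Poisson integral against $\wn$ is a square-integrable martingale. The standard sufficient condition is that
$$\EE\Big[\int_0^t\int_{\s^{d-1}}\int_0^1 |g(s,\theta,r)|^2\,\pi(\rd r)\,H(\rd\theta)\,\rd s\Big]<+\infty$$
for every $t\le 1$, and this is precisely where the uniform bound \eqref{eq: linear growth+} comes into play.

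First I would record the It\^o isometry for compensated Poisson integrals: for a predictable integrand $g$ in the appropriate $L^2$ class, writing $X_t=\int_0^t\int g\,\rd\wn$, one has
$$\EE\big[|X_t|^2\big]=\EE\Big[\int_0^t\int |g|^2\,\rd(\la\otimes H\otimes\pi)\Big].$$
Predictability of $g$ is immediate since $s\mapsto\cM_\sm$ is left-continuous and $(\cF_t)$-adapted, hence predictable, while $\be$ is continuous. Next I would bound the right-hand side using \eqref{eq: linear growth+}: for each fixed $s$, the inner integral satisfies
$$\int_{\s^{d-1}}\int_0^1 |\theta\,r^{1/\be(\cM_\sm)}|^2\,r^{-2}\,\rd r\,H(\rd\theta)\le C,$$
where the crucial point is that $C$ is independent of the value $\cM_\sm$. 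Integrating over $s\in[0,t]$ and taking expectation then gives the finite bound $Ct$, so $g$ belongs to the required space and $X_t\in L^2(\Omega)$.

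Finally, to promote the resulting local martingale to a genuine martingale, I would apply the Burkholder--Davis--Gundy inequality to control the running maximum,
$$\EE\Big[\sup_{s\le t}|X_s|^2\Big]\le C\,\EE\big[[X]_t\big]=C\,\EE\Big[\int_0^t\int |g|^2\,\rd(\la\otimes H\otimes\pi)\Big]\le C't,$$
using that the expected optional quadratic variation equals the expected predictable one. This uniform bound on $\sup_{s\le t}|X_s|$ shows that $X$ is of class (DL), so the local martingale is in fact a true $L^2$-martingale. The only point needing care is the justification of predictability of the integrand together with the verification that \eqref{eq: linear growth+} applies on the truncated region $(0,1)$ rather than on all of $\rr^+$; since that estimate has already been established for the small-jump part in the proof of Proposition \ref{proexistence}, no genuine obstacle remains, and the conclusion follows directly from the isometry and BDG.
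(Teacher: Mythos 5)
Your proof is correct and follows essentially the same route as the paper, which disposes of this lemma in one line by invoking the uniform bound \eqref{eq: linear growth+} together with the Burkholder--Davis--Gundy inequality. Your write-up simply makes explicit the standard details the paper leaves implicit (predictability of the integrand, the It\^o isometry, and the class (DL) argument upgrading the local martingale to a true $L^2$-martingale), so there is no substantive difference.
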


We also need to compute the symbol of the operator $\cL^\be$ in order to use known dimension bounds for the range of L\'evy-type processes. 

\begin{lemma} The domain of $\cL^\be$ contains $C_c^{\infty}(\RR^d)$ the space of smooth functions with compact support and the restriction of $\cL^\be$ on $C_c^{\infty}(\RR^d)$ is a pseudo-differential operator with symbol $q(x,\xi)=a(x)|\xi|^{\be(x)}$ with $a: \rr^d\to\RR$ continuous bounded below and above by two positive finite constants. 
\end{lemma}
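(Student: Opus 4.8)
The plan is to compute the symbol by freezing the coefficient $\be(x)$ and recognising the resulting frozen operator as the generator of a rotationally symmetric $\be(x)$-stable L\'evy process, whose characteristic exponent is explicit. First, since $C_c^\infty(\RR^d)\subset C_c^2(\RR^d)$, the inclusion $C_c^\infty(\RR^d)\subset\mathrm{dom}(\cL^\be)$ is immediate from \eqref{eq: generator}. Moreover, for $f\in C_c^\infty(\RR^d)$ the integral defining $\cL^\be f(x)$ converges absolutely, uniformly in $x$: since $\be$ takes values in a compact set $[\be_{\min},\be_{\max}]\subset(0,2)$, a second-order Taylor expansion bounds the integrand by $C\|D^2f\|_\infty|u|^2$ for $|u|\le 1$ and by $2\|f\|_\infty$ for $|u|>1$, while $\int_{|u|\le1}|u|^{2-d-\be(x)}\rd u+\int_{|u|>1}|u|^{-d-\be(x)}\rd u$ is bounded uniformly in $x$.

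Next I would apply $\cL^\be$, with $x$ held fixed, to the plane wave $e_\xi(y):=e^{iy\cdot\xi}$. Factoring out $e^{ix\cdot\xi}$ gives
$$e^{-ix\cdot\xi}\,\cL^\be e_\xi(x)=\be(x)\int_{\RR^d}\Big(e^{iu\cdot\xi}-1-\indiq_{|u|\le1}\,i\,u\cdot\xi\Big)|u|^{-d-\be(x)}\,\rd u.$$
Because the measure $|u|^{-d-\be(x)}\rd u$ is symmetric, the odd terms integrate to zero and the right-hand side reduces to $-\be(x)\int_{\RR^d}\big(1-\cos(u\cdot\xi)\big)|u|^{-d-\be(x)}\rd u$. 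A rotation reducing $\xi$ to $|\xi|e_1$ followed by the scaling $u\mapsto u/|\xi|$ shows this integral equals $C_{d,\be(x)}|\xi|^{\be(x)}$, where $C_{d,\al}:=\int_{\RR^d}\big(1-\cos(u_1)\big)|u|^{-d-\al}\rd u\in(0,\infty)$ for $\al\in(0,2)$. Hence the negative definite symbol is $q(x,\xi)=-e^{-ix\cdot\xi}\cL^\be e_\xi(x)=a(x)|\xi|^{\be(x)}$ with $a(x):=\be(x)\,C_{d,\be(x)}$.

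To upgrade this formal plane-wave computation to the genuine pseudo-differential representation, I would use Fourier inversion. Writing $f(y)=(2\pi)^{-d}\int e^{iy\cdot\xi}\wh f(\xi)\,\rd\xi$ for $f\in C_c^\infty(\RR^d)$, inserting it into the absolutely convergent integral for $\cL^\be f(x)$, and interchanging the $\rd u$ and $\rd\xi$ integrations by Fubini yields
$$\cL^\be f(x)=-(2\pi)^{-d}\int_{\RR^d} e^{ix\cdot\xi}\,q(x,\xi)\,\wh f(\xi)\,\rd\xi,\qquad q(x,\xi)=a(x)|\xi|^{\be(x)},$$
which is the asserted form. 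The Fubini step is licensed by the two-regime bound above combined with the rapid decay of $\wh f$: the $|u|\le1$ part is dominated by $\int|\xi|^2|\wh f(\xi)|\rd\xi<\infty$ and the $|u|>1$ part by $\int|\wh f(\xi)|\rd\xi<\infty$, both finite since $f$ is Schwartz, and the $u$-integrals are uniformly bounded in $x$.

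It remains to verify the stated properties of $a(x)=\be(x)C_{d,\be(x)}$. Continuity follows from the continuity of $\be$ (indeed Lipschitz) together with the continuity of $\al\mapsto C_{d,\al}$ on $(0,2)$, which I would obtain by dominated convergence with dominating function $\tfrac12|u|^{2-d-\al_{\max}}\indiq_{|u|\le1}+2|u|^{-d-\al_{\min}}\indiq_{|u|>1}$ valid on any compact $[\al_{\min},\al_{\max}]\subset(0,2)$. Since $\be(\RR^d)$ lies in a compact subset of $(0,2)$ on which the strictly positive continuous function $C_{d,\cdot}$ attains positive finite extrema, $a$ is bounded above and below by positive finite constants. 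The one genuinely technical point is the Fubini interchange, that is, confirming that the frozen-coefficient exponent really is the symbol of $\cL^\be$ on $C_c^\infty(\RR^d)$; the remaining estimates are routine.
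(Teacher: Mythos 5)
Your proposal is correct and takes essentially the same route as the paper: both compute the symbol by exploiting the rotational symmetry and radial scaling of the kernel $|u|^{-d-\be(x)}\rd u$ to reduce to a one-dimensional cosine integral, the only cosmetic difference being that you evaluate $\cL^\be$ on plane waves and then invoke Fourier inversion plus Fubini, while the paper applies the Fourier transform to $\cL^\be f$ directly and works in spherical coordinates. If anything, your write-up is slightly more careful than the paper's: you retain the factor $\be(x)$ from the jump kernel in $a(x)=\be(x)C_{d,\be(x)}$ (the paper's displayed computation drops it), and you actually verify the continuity and the two-sided positive bounds on $a$, which the paper asserts in the statement but does not check in its proof.
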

\begin{proof}
The first statement is obvious.  It remains to compute the symbol. Set 
\begin{align*}
\cF f(\xi) = \int f(x) e^{ix\cdot\xi} \rd x.
\end{align*}
For all $f\in C_c^{\infty}(\RR^d)$, by Fubini's Theorem, $\cF (\cL^\be f)(\xi) = -q(x,\xi) \cF f(\xi)$ where 
\begin{align*}
q(x,\xi) = \int (1-e^{-i u\cdot\xi} - i \xi\cdot u \indiq_{|u|\le 1}) \,u^{-d-\be(x)}\rd u.
\end{align*}
Set for $\al\in(0,2)$,
$$C_\al = \int_0^{\infty} (1-\cos r ) r^{-1-\al}\rd r. $$
Using spherical coordinate and by symmetry,
\begin{align*}
q(x,\xi) &= \int_{\s^{d-1}}\int_{0}^\infty (1-e^{ir(\theta\cdot\xi)} - i(\theta\cdot\xi) r\indiq_{r<1}) r^{-1-\be(x)}\rd r \rd\theta \\
 &= \int_{\s^{d-1}}\int_{0}^\infty (1-\cos(r(\theta\cdot\xi)) ) r^{-1-\be(x)}\rd r \rd\theta \\
 &= C_{\be(x)} \int_{\s^{d-1}} |\theta\cdot\xi|^{\be(x)}\rd\theta \\
 &= a(x)|\xi|^{\be(x)}
\end{align*}
where 
\begin{align*}
a(x)= C_{\be(x)} \int_{\s^{d-1}} |\theta\cdot \vec{e}_1|^{\be(x)}\rd\theta \quad \mbox{ with } \vec{e}_1 = (1, 0, \ldots, 0)\in \RR^d.
\end{align*}
This completes the proof.
\end{proof}

For stable-like processes with symbol as in previous lemma, Kolokoltsov \cite[Chapter 7]{kolokoltsov2011book} (see also \cite{kolokoltsov2000stablelike}) showed the existence of the transition densities and provided fine heat kernel estimates. Let us recall the part that is useful for our purposes.  
\begin{lemma}[\cite{kolokoltsov2011book}]
Let $p(t,x,y)$ be the transition density of stable-like processes with index $\be$. Let $\al= \inf_{x\in\RR^d}\be(x)$. Then there exists a finite positive $C$ so that $t<1, x,y\in \RR^d$,
\begin{align}\label{transition density}
p(t,x,y)\le C t^{-d/\al}.
\end{align}
\end{lemma}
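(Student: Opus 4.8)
The plan is to prove the uniform on-diagonal bound by first establishing the sharper localized estimate $\sup_{y\in\rr^d} p(t,x,y) \le C\, t^{-d/\be(x)}$ for every $x$ and every $t<1$, and then reducing \eqref{transition density} to a simple monotonicity observation. Indeed, once the localized bound is available, for $t<1$ we write $t^{-d/\be(x)} = \exp\big((d/\be(x))|\log t|\big)$, which is increasing in $1/\be(x)$; since $\be(x)\ge \al$ for all $x$ by the very definition $\al = \inf_{x}\be(x)$, we get $t^{-d/\be(x)}\le t^{-d/\al}$, and \eqref{transition density} follows at once with the same constant $C$. The point of phrasing the bound with the infimum $\al$ is precisely to trade the sharp but $x$-dependent local order for a single uniform one, and this trade is free as soon as $t<1$.

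To obtain the localized estimate I would follow the parametrix (Levi) construction used by Kolokoltsov \cite{kolokoltsov2011book}. The starting point is the frozen-coefficient density: for fixed $x$, let $p_x(t,z)$ denote the transition density of the $\be(x)$-stable L\'evy process with symbol $a(x)|\xi|^{\be(x)}$, obtained by freezing $\be$ and $a$ at $x$. By Fourier inversion and the scaling of stable laws,
\begin{align*}
\sup_{z} p_x(t,z) = p_x(t,0) = (2\pi)^{-d}\int_{\rr^d} e^{-t\,a(x)|\xi|^{\be(x)}}\,\rd\xi = C(x)\, t^{-d/\be(x)},
\end{align*}
and $C(x)$ is bounded above uniformly in $x$ because $a$ is bounded below by a positive constant and $\be$ takes values in a compact subset of $(0,2)$. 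Taking the zeroth-order approximation $Z(t,x,y):=p_x(t,y-x)$ therefore already carries the correct on-diagonal order for the leading term.

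The true density is then represented as a perturbation series $p(t,x,y) = Z(t,x,y) + (Z\ast\Phi)(t,x,y)$, where $\ast$ is the space-time convolution, $\Phi = \sum_{n\ge 1}\Psi^{\ast n}$, and $\Psi(t,x,y)=(\cL^\be - \cL_{x})Z(t,x,y)$ with $\cL_x$ the operator frozen at $x$. The heart of the argument, and the step I expect to be the main obstacle, is to show that this series converges and that the correction $Z\ast\Phi$ has on-diagonal order no worse than $Z$ itself. This requires quantitative estimates on the spatial derivatives and differences of the frozen stable densities $p_x$, combined with the regularity of the coefficients: the Lipschitz continuity of $\be$ (together with the continuity of $a$) produces a genuine gain, namely a factor $t^{\ga}$ with $\ga>0$ in $\Psi$, which makes the time integral $\int_0^t (t-s)^{\cdots}\, s^{-1+\ga}\,\rd s$ convergent and keeps the iterated convolutions summable. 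Carrying out these kernel estimates in full is exactly the content of Kolokoltsov's analysis in \cite{kolokoltsov2011book}, from which the localized on-diagonal bound follows; in keeping with the present paper I would quote those estimates directly and supply only the short monotonicity reduction described in the first paragraph to conclude \eqref{transition density}.
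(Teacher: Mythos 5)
Your proposal is correct and takes essentially the same route as the paper: the paper offers no proof of this lemma at all, but simply quotes it from Kolokoltsov's parametrix (Levi method) analysis, which is exactly the source you defer to for the hard kernel estimates, so both arguments rest on the same citation. Your additional observations --- the Fourier-inversion computation $\sup_z p_x(t,z)=p_x(t,0)=C(x)t^{-d/\be(x)}$ with $C(x)$ uniformly bounded, and the monotonicity reduction $t^{-d/\be(x)}\le t^{-d/\al}$ valid precisely because $t<1$ and $\be(x)\ge\al$ --- are correct and usefully explain why the cited estimate takes the stated uniform form.
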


Let us end this section with known dimension estimates for the range of L\'evy-type processes. 

\begin{lemma}[\cite{schilling1998rangefeller,knopova2015range}] \label{theobounds} Let $(X_t)_{t\ge 0}$ be a Feller process with generator $(A,\cD(A))$ such that $A_{|C_c^{\infty}(\rr^d)}$ is a pseudo-differential operator with symbol $q(x,\xi)$ satisfying  $|q(x,\xi)|\le c(1+|\xi|^2)$ for all $x$ and $q(\cdot,0)\equiv 0$. 
Then almost surely, $\dim_\cH(X[0,1])\le d\wedge \be_\infty$ where 
\begin{equation} \be_{\infty} = \inf \llb \de>0 : \lim_{|\xi|\to\infty} \frac{\sup_{|\eta|\le |\xi|}\sup_{x\in\rr^d}|q(x,\eta)|}{|\xi|^\de} = 0 \rrb .\end{equation}
If in addition the transition density of the process $X$ exists and satisfies \eqref{transition density} for some constants $C$ and $\al\in(0,2)$, then 
\begin{align*}
\dim_\mathcal{H}(X[0,1])\ge d\wedge \al \quad \mbox{ a.s. }
\end{align*}
\end{lemma}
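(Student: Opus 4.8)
The plan is to prove the two inequalities by entirely separate arguments: the upper bound $d\wedge\be_\infty$ via a dyadic covering of the range controlled by Schilling's maximal inequality, and the lower bound $d\wedge\al$ via a Frostman energy estimate fed by the heat kernel bound \eqref{transition density}. Both halves are contained in \cite{schilling1998rangefeller} and \cite{knopova2015range} respectively, so in the write-up I would state them by citation and only indicate how the general symbol-theoretic quantities specialize here.

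\emph{Upper bound.} Set $H(\rho):=\sup_{|\xi|\le\rho}\sup_{x}|q(x,\xi)|$. Under the hypotheses $q(\cdot,0)\equiv0$ and $|q(x,\xi)|\le c(1+|\xi|^2)$, Schilling's maximal inequality gives, for every starting point, $\PP(\sup_{s\le t}|X_s-X_0|>r)\le c\,t\,H(1/r)$. Fix $\de>\be_\infty$ and choose $\ga\in(\be_\infty,\de)$; by the very definition of $\be_\infty$ one has $H(1/r)\le Cr^{-\ga}$ for all small $r$. I would then partition $[0,1]$ into the $2^n$ dyadic intervals $I_j$ of length $2^{-n}$, cover $X[0,1]$ by the balls centred at the values $X_{(j-1)2^{-n}}$ with radii $R_j:=\mathrm{osc}_{I_j}X$, and bound the $\de$-dimensional Hausdorff content by $\sum_j(2R_j)^\de$. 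By the Markov property $R_j$ has the same law as $\sup_{s\le 2^{-n}}|X_s-X_0|$ under the corresponding fresh start, so splitting $\EE[R_j^\de]=\int_0^\infty\de r^{\de-1}\PP(R_j>r)\,\rd r$ at the crossover radius $r_\ast\sim 2^{-n/\ga}$ (where $c\,2^{-n}H(1/r)\simeq1$) gives $\EE[R_j^\de]\le C\,2^{-n\de/\ga}$, whence $\EE[\sum_j(2R_j)^\de]\le C\,2^{n(1-\de/\ga)}\to0$ since $\de>\ga$. Fatou's lemma then forces the $\de$-content, hence $\cH^\de(X[0,1])$, to vanish a.s., so $\dim_\cH(X[0,1])\le\de$; letting $\de\downarrow\be_\infty$ and intersecting with the trivial bound $\dim_\cH\le d$ yields $d\wedge\be_\infty$.

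\emph{Lower bound.} Let $\mu$ be the occupation measure $\mu(A)=\int_0^1\indiq_A(X_s)\,\rd s$, a probability measure carried by $X[0,1]$. For fixed $\ga<d\wedge\al$ I would estimate its $\ga$-energy,
\begin{align*}
\EE[\mathcal{E}_\ga(\mu)]=\EE\Big[\int_0^1\!\!\int_0^1|X_s-X_t|^{-\ga}\,\rd s\,\rd t\Big]=2\int_0^1\!\!\int_s^1\EE\big[|X_t-X_s|^{-\ga}\big]\,\rd t\,\rd s .
\end{align*}
By the Markov property the inner expectation equals $\int|y-x|^{-\ga}p(t-s,x,y)\,\rd y$ evaluated at $x=X_s$; splitting this $y$-integral at $|y-x|=(t-s)^{1/\al}$ and bounding $p$ by \eqref{transition density} on the near region and by its total mass $1$ on the far region produces, uniformly in $x$, the bound $C(t-s)^{-\ga/\al}$ (the hypothesis $\ga<d$ is used on the near part, $\ga<\al$ on the far part). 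Consequently $\EE[\mathcal{E}_\ga(\mu)]\le C\int_0^1\!\int_0^1|t-s|^{-\ga/\al}\,\rd s\,\rd t<\infty$ because $\ga/\al<1$, so $\mathcal{E}_\ga(\mu)<\infty$ a.s.; Frostman's energy criterion then gives $\dim_\cH(X[0,1])\ge\dim_\cH(\mathrm{supp}\,\mu)\ge\ga$, and letting $\ga\uparrow d\wedge\al$ finishes.

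I expect the genuine difficulty to lie in the upper bound, precisely in the tail of $\EE[R_j^\de]$: the estimate $H(1/r)\le Cr^{-\ga}$ only governs the small-$r$ (high-frequency) regime, whereas for large $r$ the assumption $|q|\le c(1+|\xi|^2)$ merely keeps $H(1/r)$ bounded and the naive moment would diverge. The standard remedy is a truncation of the finitely many jumps of $X$ on $[0,1]$ exceeding size one: removing them decomposes the range into finitely many pieces of small oscillation together with a finite set, which is dimensionally negligible, and on each piece the covering computation above applies. Carrying out this truncation cleanly, together with the derivation of the maximal inequality from the symbol bounds, is the one step I would lean on \cite{schilling1998rangefeller} for rather than reprove.
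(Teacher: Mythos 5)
The paper never proves this lemma: it is imported wholesale from the two cited works, with the upper bound taken from Schilling (1998) and the lower bound from Knopova--Schilling--Wang (2015), so there is no internal argument to compare yours against. Your sketch is a faithful reconstruction of what those references actually do --- Schilling's maximal inequality $\PP(\sup_{s\le t}|X_s-X_0|>r)\le c\,t\,\sup_{|\xi|\le 1/r}\sup_x|q(x,\xi)|$ feeding a dyadic covering for the upper bound, and an occupation-measure energy estimate with Frostman's criterion for the lower bound --- and you correctly isolate the one genuine difficulty: the bound $H(1/r)\le Cr^{-\gamma}$ is a high-frequency (small-$r$) statement, so the naive moment $\EE[R_j^\delta]$ diverges in its large-$r$ tail (already for stable processes such moments are infinite), and some device --- your truncation of the finitely many large jumps, or equivalently the crossing-time/stopping-time covering used in the literature --- is indispensable there; leaning on the citation for exactly that step and for the derivation of the maximal inequality is consistent with what the paper itself does. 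One minor imprecision in the lower bound: the hypothesis $\gamma<\alpha$ is not what controls the far region of the $y$-integral (there one only needs $|y-x|^{-\gamma}\le (t-s)^{-\gamma/\alpha}$ together with total mass at most $1$); it is needed afterwards, to make the time integral $\int_0^1\int_0^1|t-s|^{-\gamma/\alpha}\,\rd s\,\rd t$ finite. This does not affect correctness.
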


Combining previous lemmas, it is now plain that the Hausdorff dimension for the range of stable-like processes is bounded above by $\sup_{x\in\RR^d}\be(x)\wedge
 d$ and bounded below by $\inf_{x\in\RR^d}\be(x)\wedge d$. We prove in the sequel that neither bound is optimal.

\section[Upper bound of Theorem 3.1]{Study of the $p$-variation of $\cM$ : upper bound of Theorem \ref{theorange}}

The aim of this section is to prove that
\begin{align}\label{rangeupperbound}
\dim_\cH \Big(\cM([0,1])\Big) \le \be^*_M\wedge d,  \mbox{ where } \be^*_M = \sup_{t\in[0,1]} \be(\cM_t).
\end{align}
We  use a slicing procedure for $\cM$ and the $p$-variation approach to tackle this problem. The use of $p$-variation in deducing an upper bound for the Hausdorff dimension of the range of sample paths goes back, at least, to McKean \cite{mckean1955stable}. In this article we apply a Theorem by L\'epingle \cite{lepingle1976pvariation} on the $p$-variation of semimartingales. 

First let us introduce some notations for the $p$-variation of functions.

Let $f : \rr^+ \to \rr^d$ be a c\`adl\`ag function and $\cP$ be a finite partition of the interval $[0,t]$ deduced naturally from a family of strictly ordered points $(0=t_0<\ldots < t_n = t)$. Following the notations in \cite{lepingle1976pvariation}, for any $p\in(0,2)$, let $$\ds V_p(f,\cP) = \sum_{i=0}^{n-1} |f(t_{i+1})-f(t_i)|^p.$$ Then the (strong) $p$-variation of $f$ in the interval $[0,t]$ is 
\begin{equation*} W_p(f,[0,t]) = \sup \llb V_p(f,\cP) : \cP \mbox{ finite partition of } [0,t] \rrb .
\end{equation*} 
We also introduce the quantity corresponding to the jumps of $f$ in the interval $[0,t]$,
\begin{equation*} S_p(f,[0,t]) = \sum_{0<s\le t} |\Delta f_s|^p,
\end{equation*} 
where $\Delta f_s = f(s)-f(s-)$ and $f(s-) = \lim_{t\uparrow s} f(t)$.

Recall that a semimartingale is a process of the form $X_t=X_0+ M_t+A_t$, where $X_0$ is finite a.s. and is $\cF_0$ measurable, $M_t$ is a local martingale, and $A_t$ is a process whose sample paths have bounded variation on $[0,t]$ for each $t$. Such a process can be written as $X_t = X^c_t + X^j_t$, the sum of a continuous part $X^c$ and a pure jump part $X^j$. Let us state a part of L\'epingle's result (see Theorem 1 of \cite{lepingle1976pvariation}) which is useful for our purpose.

\begin{theorem}[\cite{lepingle1976pvariation}]\label{theopvariation} Let $X$ be a semimartingale such that $\langle X^c \rangle_{\cdot} \equiv 0$. Let $p>0$. Then almost surely, 
\begin{equation*} S_p(X,[0,1])<+\infty \Longrightarrow \( \,\forall\, p'>p, \ W_{p'}(X,[0,1])<+\infty \) .
\end{equation*}
\end{theorem}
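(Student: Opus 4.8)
The plan is to reduce, by stopping, to a bounded setting and then to split $X$ into its martingale and finite-variation parts, treating each separately. Since both $S_p(X,[0,1])<+\infty$ and the conclusion $W_{p'}(X,[0,1])<+\infty$ are assertions of a.s. finiteness on the \emph{fixed} interval $[0,1]$, it suffices to prove them for $X^{T_n}$ along a localizing sequence $T_n\uparrow\infty$: for $n$ large one has $T_n\ge 1$ and $X^{T_n}$ agrees with $X$ on $[0,1]$. Choosing $T_n$ suitably I may assume $X$ is bounded, that its finite-variation part $A$ has bounded total variation, and that its local martingale part $M$ is in $\mathcal{H}^2$ with $\mathbb{E}\big([M]_1\big)<+\infty$; the hypothesis $\langle X^c\rangle\equiv 0$ says $X$ has no continuous martingale part, so $M$ is \emph{purely discontinuous}. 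By quasi-subadditivity of $p'$-variation ($W_{p'}(Y+Z)^{1/p'}\le W_{p'}(Y)^{1/p'}+W_{p'}(Z)^{1/p'}$ for $p'\ge 1$, and $W_{p'}(Y+Z)\le W_{p'}(Y)+W_{p'}(Z)$ for $p'\le 1$) it is enough to bound $W_{p'}(A)$ and $W_{p'}(M)$ separately. Note also that $X$ being càdlàg its jumps are bounded, whence $S_p(X)<+\infty$ forces $S_{p'}(X)=\sum_s|\Delta X_s|^{p'}\le(\sup_s|\Delta X_s|)^{p'-p}\,S_p(X)<+\infty$ for every $p'>p$, and likewise for $A$ and $M$.

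The finite-variation part is the easy one. For $p'\ge 1$, any partition gives $\sum_i|A(t_{i+1})-A(t_i)|^{p'}\le\big(\sum_i|A(t_{i+1})-A(t_i)|\big)^{p'}\le W_1(A)^{p'}<+\infty$, so $W_{p'}(A)<+\infty$. The range $p'<1$ is genuinely delicate in general (a nonconstant continuous path has infinite $p'$-variation when $p'<1$), but it is applied only to processes of finite variation and pure jump type; there separating the jumps is optimal, so $W_{p'}(A)=S_{p'}(A)<+\infty$ by the previous paragraph.

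The heart of the matter is the purely discontinuous martingale $M$, where no pathwise inequality cleanly separates a partition increment into its jump and its oscillation contributions, so the martingale structure must be used. I would decompose $M$ dyadically in jump size: $M=\sum_{n\ge n_0}M_{(n)}$, where $M_{(n)}$ is the compensated sum of the jumps of $M$ of modulus in $(2^{-n-1},2^{-n}]$, plus a finite-variation process carrying the finitely many jumps exceeding $2^{-n_0}$. Each $M_{(n)}$ is a purely discontinuous martingale with jumps bounded by $2^{-n}$, and the number $N_n$ of such jumps obeys $N_n(2^{-n})^p\le S_p(M)$, hence $N_n\le C\,2^{np}$ and $[M_{(n)}]_1=\sum(\Delta M_{(n)})^2\le 2^{-2n}N_n\le C\,2^{-n(2-p)}$. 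For $p'\in[1,2)$, Hölder's inequality with exponents $2/p'$ and $2/(2-p')$ yields, for any partition (at most $N_n$ increments being nonzero),
\begin{equation*}
\sum_i|M_{(n)}(t_{i+1})-M_{(n)}(t_i)|^{p'}\le\Big(\sum_i|M_{(n)}(t_{i+1})-M_{(n)}(t_i)|^2\Big)^{p'/2}\,N_n^{\,1-p'/2}.
\end{equation*}
Taking expectations, using the orthogonality of martingale increments to replace $\mathbb{E}\sum_i|\Delta_iM_{(n)}|^2$ by $\mathbb{E}\big([M_{(n)}]_1\big)$ and Jensen's inequality (as $p'/2<1$), the exponents combine into $N_n^{1-p'/2}\big([M_{(n)}]_1\big)^{p'/2}\le C\,2^{np(1-p'/2)}2^{-n(2-p)p'/2}=C\,2^{-n(p'-p)}$, which is summable over $n$ \emph{precisely} because $p'>p$. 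This fixed-partition computation already exhibits the critical threshold.

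The main obstacle is upgrading the fixed-partition estimate to the \emph{supremum over all partitions} that defines $W_{p'}$, together with the randomness of the counts $N_n$. This is exactly where the real work lies: one controls the strong $p'$-variation of each $M_{(n)}$ by its maximal oscillation, which the Burkholder--Davis--Gundy inequality ties back to $[M_{(n)}]_1$, and then passes from the resulting $L^1$/$L^2$ bounds to almost-sure finiteness of the supremum by a Borel--Cantelli argument (or via Lenglart's domination inequality); summing over the dyadic scales through quasi-subadditivity then gives $W_{p'}(M)<+\infty$ a.s. Finally, the range $p'\ge 2$ is automatic, since $[X]_1=S_2(X)<+\infty$ already forces finite $p'$-variation, so the above analysis is only needed for $p'\in(p,2)$. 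See \cite{lepingle1976pvariation} for the complete argument.
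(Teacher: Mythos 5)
A preliminary remark: the paper contains no proof of this statement --- it is imported verbatim as Theorem 1 of L\'epingle \cite{lepingle1976pvariation} and used as a black box in Section 3 --- so your sketch can only be judged on its own merits and against L\'epingle's original argument, not against anything in this paper. Your skeleton (localize; write $X=M+A$; observe that $M$ is purely discontinuous because $\langle X^c\rangle\equiv 0$; cut $M$ into dyadic jump-size bands $M_{(n)}$; do the H\"older/orthogonality bookkeeping producing the factor $2^{-n(p'-p)}$) is sensible, and the exponent count does isolate why $p'>p$ is the threshold. But the central estimate is invalid as written: $M_{(n)}$ is a \emph{compensated} jump sum, so its paths are not constant between jumps --- they move along the predictable compensator --- and the claim that at most $N_n$ partition increments are nonzero is simply false; in general every increment is nonzero. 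Nor can you repair this by splitting each band into ``raw jump sum plus compensator'' and treating the latter as a finite-variation term: the total variations of the band compensators sum, over $n$, to the total variation of the compensator of all small jumps, which is infinite whenever $\sum_s|\Delta X_s|\wedge 1=\infty$ --- a case the hypothesis $S_p<\infty$ permits as soon as $p>1$, i.e. exactly in the regime that is the heart of the theorem. Handling this, together with the upgrade from a fixed-partition bound to the supremum over all partitions defining $W_{p'}$ (which you explicitly defer to Burkholder--Davis--Gundy plus Borel--Cantelli), is the actual content of L\'epingle's proof. A smaller point: dismissing $p'\ge 2$ as ``automatic'' because $[X]_1=S_2(X)<\infty$ is not right either --- $[X]_1<\infty$ holds for every semimartingale, Brownian motion included, and does not imply finite $2$-variation; finiteness of $W_{p'}$ for $p'>2$ is itself a theorem of L\'epingle, not a triviality.

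Second, and more fundamentally, the range $p'<1$ cannot be proved at all, because the statement as transcribed is false there: take $X_t=t$, or a compensated Poisson process. This is a semimartingale with $\langle X^c\rangle\equiv 0$ and $S_p(X,[0,1])<\infty$ for every $p>0$, yet $W_{p'}(X,[0,1])=+\infty$ for every $p'<1$, since the partition into $n$ equal intervals already gives $n\cdot n^{-p'}=n^{1-p'}\to\infty$. Your sketch slips past this by asserting that the finite-variation part $A$ is ``of pure jump type''; that is not implied by the hypotheses --- it is precisely the missing hypothesis. L\'epingle's theorem carries a restriction to exponents $\ge 1$ which the transcription in the paper has dropped, and a correct blind proof would have to discover this and either restore the restriction or add a pathwise pure-jump assumption. (Incidentally, this is also why the paper's own application in Lemma \ref{lemmapartialvariation}, where $p'=(2k+3)/m$ may be smaller than $1$, really rests on the fact that the sliced processes $\cM^{k,m}$ have vanishing compensator by the symmetry of $H$, hence are pathwise pure jump, rather than on the theorem exactly as quoted.)
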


Following \cite{yang2015jumpdiffusion}, we slice the process  $\cM$ according to the different behavior of the local index process $t\mapsto\be(\cM_t)$. This induces a decomposition for the process $\cM$. Precisely, for every $m\in\nn_*$, we write $\cM_\cdot = \cM_0 + \sum_{k=0}^{m-1} \cM^{k,m}_\cdot+ \cM^{\ge 1}_\cdot$ where
\begin{equation*} \cM^{k,m}_t = \int_0^t \int_{S^{d-1}}\int_0^1 \theta r^{1/\be(\cM_\sm)}\indiq_{\be(\cM_\sm) \in [\frac{2k}{m},\frac{2k+2}{m})} \wn(ds,d\theta,dr).
\end{equation*}
and
\begin{equation*}
\cM^{\ge 1}_t = \int_0^t \int_{\s^d} \int_1^{+\infty} \theta r^{1/\be(\cM_{s-})} N(ds,d\theta, dr).
\end{equation*}
From a trajectory point of view, each sliced process behaves exactly the same as $\cM$ when the index process takes value in the sliced interval, otherwise it is only a constant process. The process $\cM^{\ge 1}$ is  not relevant  in the computation of $p$-variation for $\cM$ since it is piecewise constant with finite number of jumps in the unit interval.  

Now we are ready to prove \eqref{rangeupperbound}.  For each $(k,m)$, we study the $p$-variation of $\cM^{k,m}$, then deduce the finiteness of the $p$-variation of the whole process $\cM$ for any $p>\be^*_M$. The desired inequality follows by a general argument by McKean \cite{mckean1955stable} on the relation between Hausdorff dimension of the range of a function and its $p$-variation. 

\begin{lemma}\label{lemmapartialvariation} For every $m\in\nn_*$ and every $k= 0, \ldots, m$, almost surely,
\begin{equation*}W_{\frac{2k+3}{m}}(\cM^{k,m},[0,1])< + \infty.\end{equation*}
\end{lemma}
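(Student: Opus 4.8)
The plan is to invoke L\'epingle's theorem (Theorem \ref{theopvariation}) for the semimartingale $\cM^{k,m}$. Since $\cM^{k,m}$ is a compensated Poisson integral, it is a pure-jump $L^2$-martingale, so it has no continuous martingale component and $\langle (\cM^{k,m})^c\rangle_\cdot\equiv 0$. Consequently, to conclude $W_{(2k+3)/m}(\cM^{k,m},[0,1])<+\infty$ a.s., it suffices by Theorem \ref{theopvariation} to exhibit some exponent $p$ with $p<\frac{2k+3}{m}$ for which the jump sum $S_p(\cM^{k,m},[0,1])$ is finite almost surely. I would prove finiteness of $S_p$ by bounding its expectation.

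\textbf{Jumps and the compensation formula.} The jumps of $\cM^{k,m}$ occur at the atoms of $N$ lying in the region $\{\be(\cM_\sm)\in[\tfrac{2k}{m},\tfrac{2k+2}{m})\}\times\{r\in(0,1)\}$, and at such a time the increment is $\theta\, r^{1/\be(\cM_\sm)}$, so that $|\Delta\cM^{k,m}_s|=r^{1/\be(\cM_\sm)}$ since $|\theta|=1$ on $\s^{d-1}$. The integrand $r^{p/\be(\cM_\sm)}\indiq_{\be(\cM_\sm)\in[2k/m,(2k+2)/m)}$ is predictable (it depends on $\cM$ only through its left limit), so the compensation formula for the Poisson measure $N$ with intensity $\la\otimes H\otimes\pi$ gives
\begin{equation*}
\E\bkt{S_p(\cM^{k,m},[0,1])}=\E\int_0^1\!\!\int_{\s^{d-1}}\!\!\int_0^1 r^{p/\be(\cM_\sm)}\,\indiq_{\be(\cM_\sm)\in[\frac{2k}{m},\frac{2k+2}{m})}\, r^{-2}\rd r\,H(\rd\theta)\,\rd s.
\end{equation*}
On the slicing event we have $\be(\cM_\sm)<\frac{2k+2}{m}$, hence whenever $p>\frac{2k+2}{m}$ the inner integral converges and equals $\int_0^1 r^{p/\be(\cM_\sm)-2}\rd r=\frac{\be(\cM_\sm)}{p-\be(\cM_\sm)}$.

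\textbf{Choice of exponent and conclusion.} This is where the width of the slices is used, and it is the one delicate point: one \emph{cannot} take $p=\frac{2k+2}{m}$ directly, because the value $\frac{\be(\cM_\sm)}{p-\be(\cM_\sm)}$ blows up as $\be(\cM_\sm)\uparrow\frac{2k+2}{m}$. Instead I would fix $p=\frac{2k+2}{m}+\e$ with $0<\e<\frac1m$, so that $\frac{2k+2}{m}<p<\frac{2k+3}{m}$. On the slicing event this forces $p-\be(\cM_\sm)>\e$, whence the inner integral is uniformly bounded by $\frac{\be(\cM_\sm)}{\e}<\frac2\e$. Since $H$ is a probability measure and the time interval has length $1$, it follows that $\E[S_p(\cM^{k,m},[0,1])]\le \frac2\e<+\infty$, so $S_p(\cM^{k,m},[0,1])<+\infty$ almost surely. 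Applying Theorem \ref{theopvariation} with $p'=\frac{2k+3}{m}>p$ then yields $W_{\frac{2k+3}{m}}(\cM^{k,m},[0,1])<+\infty$ a.s., as required. (For the degenerate index $k=m$ the slicing indicator is identically zero because $\be<2$, so $\cM^{m,m}\equiv 0$ and the claim is trivial.) I expect the only genuine obstacle to be this margin argument — arranging $p$ strictly between $\frac{2k+2}{m}$ and $\frac{2k+3}{m}$ so that the singularity of the radial integral at $p=\be(\cM_\sm)$ is avoided uniformly; everything else is a direct application of the compensation formula and L\'epingle's theorem.
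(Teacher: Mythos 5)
Your proposal is correct and follows essentially the same route as the paper: the paper also applies L\'epingle's theorem after bounding $\E[S_p(\cM^{k,m},[0,1])]$ via the compensation formula for a $p$ strictly between $\frac{2k+2}{m}$ and $\frac{2k+3}{m}$ (it fixes $p=\frac{2k+5/2}{m}$, i.e.\ your $\e=\frac{1}{2m}$, and bounds $r^{p/\be(\cM_{s-})}\le r^{(2k+5/2)/(2k+2)}$ on the slicing event rather than evaluating the radial integral exactly). Your explicit treatment of the margin $\e$ and of the degenerate index $k=m$ matches the paper's argument in substance.
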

\begin{proof} The method consists in applying Theorem  \ref{theopvariation} to each $\cM^{k,m}$ since each of them is a semimartingale satisfying $\langle (\cM^{k,m})^c \rangle_\cdot \equiv 0$. We start with the observation that
\begin{align*} &\quad S_{(2k+\frac{5}{2})/m}(\cM^{k,m},[0,1])\\ &= \int_0^1\int_{\mathbb{S}^{d-1}} \int_0^1 r^{(2k+\frac{5}{2})/m\be(\cM_\sm)}\indiq_{\be(\cM_\sm)\in[2k/m,(2k+2)/m)}N(ds,d\theta, dr).
\end{align*} 

Taking expectation, we see that 
\begin{align*}&\E[S_{(2k+\frac{5}{2})/m}(\cM^{k,m},[0,1])] \\
&=  \int_0^1\int_0^1 \E[ r^{(2k+\frac{5}{2})/m\be(\cM_\sm)}\indiq_{\be(\cM_\sm)\in[2k/m,(2k+2)/m)}]\,\rd s\,\frac{\rd r}{r^2}  \\
&\le  \int_0^1\int_0^1 r^{(2k+\frac{5}{2})/(2k+2)-2} \,\rd r\,\rd s<+\infty.
\end{align*}
Consequently, $\ds S_{(2k+\frac{5}{2})/m}(\cM^{k,m},[0,1])$ is finite almost surely. An application of  Theorem \ref{theopvariation} ends the proof. 
\end{proof}

Now the finiteness of the $p$-variation of the whole process $\cM$ is proved as follows. 
\begin{lemma}\label{lemmaglobalvar} Almost surely, for any $p> \be^*_M$, $$W_p(\cM,[0,1])<+\infty.$$
\end{lemma}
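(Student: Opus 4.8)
The goal is to deduce finiteness of $W_p(\cM,[0,1])$ for all $p>\be^*_M$ from the partial results in Lemma \ref{lemmapartialvariation}. The overall strategy is to exploit the slicing decomposition $\cM_\cdot = \cM_0 + \sum_{k=0}^{m-1}\cM^{k,m}_\cdot + \cM^{\ge 1}_\cdot$, apply the partial-variation bound to each slice, and then use the subadditivity properties of the $p$-variation to recombine. The key observation driving the choice of exponents is that on the support of the indicator $\indiq_{\be(\cM_\sm)\in[2k/m,(2k+2)/m)}$, the relevant local index is at most $(2k+2)/m$, so the exponent $(2k+3)/m$ used in Lemma \ref{lemmapartialvariation} strictly exceeds it, giving finite $p$-variation for each slice with a margin.

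First I would fix $p>\be^*_M$ and choose $m$ large enough that there is room to work: since $\be^*_M = \sup_t\be(\cM_t)$, for $m$ large the sup is captured by finitely many slices, namely those $k$ with $2k/m < \be^*_M$, and for those slices I want $(2k+3)/m \le p$. Concretely, if I pick $m$ so that $\be^*_M + 3/m \le p$ (using $2k/m \le \be^*_M$ on the active slices, hence $(2k+3)/m \le \be^*_M + 3/m$), then Lemma \ref{lemmapartialvariation} gives $W_p(\cM^{k,m},[0,1]) \le W_{(2k+3)/m}(\cM^{k,m},[0,1]) < +\infty$ almost surely for each active $k$ (monotonicity of $W_q$ in $q$ for a fixed path, since $p \ge (2k+3)/m$). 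The slices with $2k/m \ge \be^*_M$ contribute no jumps at all — the indicator vanishes identically along the path — so those $\cM^{k,m}$ are constant and contribute zero $p$-variation.

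Next I would recombine. For $p\ge 1$ the map $f\mapsto W_p(f,[0,1])^{1/p}$ is a seminorm (Minkowski-type subadditivity), so $W_p(\cM,[0,1])^{1/p} \le \sum_k W_p(\cM^{k,m},[0,1])^{1/p} + W_p(\cM^{\ge 1},[0,1])^{1/p}$, a finite sum of finite terms. For $p<1$ one instead uses the inequality $|a+b|^p \le |a|^p + |b|^p$, which makes $V_p$ itself subadditive across the finitely many summands, again yielding a finite bound. Finally, $\cM^{\ge 1}$ is piecewise constant with finitely many jumps on $[0,1]$, so its $p$-variation is trivially finite. Thus $W_p(\cM,[0,1]) < +\infty$ almost surely for the chosen $p$; taking a countable sequence $p_n\downarrow\be^*_M$ gives the statement for all $p>\be^*_M$ simultaneously on a single almost-sure event.

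The step I expect to require the most care is the recombination, specifically justifying that the finitely many almost-sure events (one per slice, from Lemma \ref{lemmapartialvariation}) can be intersected into a single full-measure event on which $W_p(\cM,[0,1])<+\infty$ holds for \emph{every} $p>\be^*_M$. Since $\be^*_M$ is itself random, I would need to first fix a deterministic rational $p$ and a deterministic $m$, establish finiteness on the event $\{\be^*_M < p - 3/m\}$ intersected with the slice events, and then take a countable union over rationals $p$ and integers $m$; the subadditivity inequalities themselves are routine, but the interplay between the random threshold $\be^*_M$ and the deterministic slice exponents is the delicate bookkeeping.
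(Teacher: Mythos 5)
Your proof is correct and takes essentially the same route as the paper's: the slicing decomposition, Lemma \ref{lemmapartialvariation} applied slice by slice, finite subadditivity of $V_p$ to recombine (with the piecewise-constant large-jump part negligible), and countable bookkeeping for the random threshold $\be^*_M$ --- your discretization over rational $p$ and deterministic $m$ plays exactly the role of the paper's events $A_{k,m}=\{\be^*_M+\frac{3}{m}\ge\frac{2k+3}{m}\}$ and $B_{k,m}=\{W_{\be^*_M+3/m}(\cM^{k,m},[0,1])<\infty\}$, and your active/inactive dichotomy is the paper's splitting on $A_{k,m}$ versus $A_{k,m}^c$. Two harmless imprecisions: a slice with $2k/m=\be^*_M$ need \emph{not} vanish (the supremum can be attained on a time set of positive Lebesgue measure, e.g.\ if $\be(\cM_t)$ sits at its maximum on an interval), so that boundary slice must be counted as active --- your estimate $(2k+3)/m\le\be^*_M+3/m\le p$ already covers it, and this is precisely why the paper defines $A_{k,m}$ with a non-strict inequality; and $W_p\le W_q$ for $p\ge q$ is not a literal inequality (a single jump of size $2$ gives $W_p=2^p$, increasing in $p$) --- what holds, and what you need, is that finiteness of $W_q$ forces finiteness of $W_p$ for bounded c\`adl\`ag paths, which is how the paper phrases it via the non-decreasing indicator $p\mapsto\indiq_{W_p(f,[0,1])<\infty}$.
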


\begin{proof} Recall that $\be^*_M$ is defined in \eqref{rangeupperbound}. Consider the events $$A_{k,m}= \llb \be^*_M + \frac{3}{m} \ge \frac{2k+3}{m} \rrb, \ \ B_{k,m} = \llb W_{\be^*_M+\frac{3}{m}}(\cM^{k,m},[0,1])< + \infty \rrb .$$ 
Since the mapping $p\mapsto \indiq_{W_p(f,[0,1])<\infty}$ is non-decreasing, one has 
$$\pp(B_{k,m} \cap A_{k,m} ) \ge \pp\( \llb W_{\frac{2k+3}{m}}(\cM^{k,m},[0,1])<+\infty\rrb \cap A_{k,m} \) .$$
Under $A^c_{k,m}$, i.e. the complementary of $A_{k,m}$, $\cM^{k,m} \equiv 0$ by the properties of the compensated Poisson integral. Hence $B_{k,m}$ is also realized. This inclusion $A^c_{k,m} \subset B_{k,m}$ yields  
$$\pp(B_{k,m}\cap A^c_{k,m}) =\pp(A^c_{k,m}).$$ 
Combining the previous two estimates, one obtains
\begin{align*} \pp(B_{k,m}) &= \pp(B_{k,m}\cap A_{k,m}) + \pp(B_{k,m}\cap A^c_{k,m}) \\
&\ge \pp(\{W_{\frac{2k+3}{m}}(\cM^{k,m},[0,1])<+\infty\}\cap A_{k,m}) + \pp(A^c_{k,m}) \\
&\ge \pp(W_{\frac{2k+3}{m}}(\cM^{k,m},[0,1])<+\infty) =1, \nonumber
\end{align*} 
where Lemma \ref{lemmapartialvariation} has been used. By Jensen's inequality (when $p\ge 1$) or subadditivity (when $p<1$), for any $p\in(0,3)$, $n\in\nn_*$ and $(a_1,\ldots,a_n)\in\rr^n$, one has $(\sum_{i=1}^n|a_i|)^p \le (n^{p-1}\vee 1) \sum_{i=1}^n |a_i|^p.$ This yields for any finite partition, every family of c\`adl\`ag functions $f_i : [0,1]\to \rr$ with $i=1,\ldots,n$, that
\begin{align*}
V_{p}\Big(\sum_{i=1}^n f_i,\cP\Big) \le C(n,p)\sum_{i=1}^n V_{p}(f_i,\cP) \le C(n,p)\sum_{i=1}^n W_{p}(f_i,[0,1]), 
\end{align*}
where $C(n,p)=n^{p-1}\vee 1$. Therefore, since $\pp(B_{k,m})=1$, one has a.s.
\begin{equation*} W_{\be^*_M+\frac{3}{m}}(\cM,[0,1])\le C(m,\be^*_M+\frac{3}{m})\sum_{k=1}^m W_{\be^*_M+\frac{3}{m}}(\cM^{k,m},[0,1])<+\infty 
\end{equation*}
for every $m\in\nn_*$, which yields the result.  
\end{proof}

\begin{preuve}{\it \ of Formula \eqref{rangeupperbound} : }
Recall the following fact in \cite{mckean1955stable} : if $f : [0,1] \to \rr^d$ is a c\`adl\`ag function with finite $p$-variation, then 
\begin{align}\dim_\cH \Big( f[0,1]\Big) \le p\wedge d. \label{variation2dimension}
\end{align}
Now \eqref{rangeupperbound} follows by combining the fact above and Lemma \ref{lemmaglobalvar}.
\end{preuve}

\section[Lower bound of Theorem 3.1]{Lower bound of Theorem \ref{theorange}}\label{sectionlower}

Throughout this section, we use $\PP^x$ to denote the law of $M$ with initial value $M_0=x\in\RR^d$. Denote by $\EE^x$ the expectation with respect to $\PP^x$.

To prove the lower bound, we introduce a suitable coupling of $\cM$ with a family of processes whose dimension of the range is known. This coupling is used in the proof of the following lemma, see \eqref{coupling} below.
\begin{lemma}\label{lemmalower} Let $0\le t_0 < 1$. For every $z\in\RR^d$, $\PP^z$-a.s.
$$\dim_\cH \Big( \cM[t_0,1]\Big) \ge \be(\cM_{t_0})\wedge d.$$
\end{lemma}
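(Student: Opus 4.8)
The plan is to fix the starting point and a small parameter, then couple $\cM$ with a globally-controlled stable-like process driven by the \emph{same} Poisson measure $N$, exploiting pathwise uniqueness from Proposition \ref{proexistence}. First I would reduce to a deterministic initial point via the strong Markov property at $t_0$: under $\PP^z$ the conditional law of $(\cM_{t_0+s})_{s\ge 0}$ given $\cF_{t_0}$ is $\PP^{\cM_{t_0}}$, so it suffices to prove that for every fixed $x\in\RR^d$ and every $\ep\in(0,\be(x))$,
\begin{equation*}
\PP^x\Big(\dim_\cH \cM[0,1-t_0] \ge d\wedge(\be(x)-\ep)\Big) = 1 .
\end{equation*}
Undoing the conditioning on $\cF_{t_0}$ and letting $\ep=1/n\downarrow 0$ along a countable sequence then yields the stated bound $\dim_\cH \cM[t_0,1]\ge \be(\cM_{t_0})\wedge d$, $\PP^z$-a.s.

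Next I would build the comparison process. Let $L$ be the Lipschitz constant of $\be$, set $\rho=\ep/L$, and define $\wt\be = \be \vee (\be(x)-\ep)$. This $\wt\be$ is Lipschitz, valued in the same compact subset of $(0,2)$ as $\be$, satisfies $\inf_{\RR^d}\wt\be\ge \be(x)-\ep$, and coincides with $\be$ on the ball $B(x,\rho)$ (because $\be$ is $L$-Lipschitz). By Proposition \ref{proexistence}, let $Y$ be the pathwise-unique solution of the SDE \eqref{theequation} with index $\wt\be$, driven by the same $N$ and started from $Y_{t_0}=\cM_{t_0}=x$. The coupling is then realized through the exit time
\begin{equation*}
\tau = \inf\{\, t\ge t_0 : \cM_t\notin B(x,\rho) \ \text{or}\ Y_t\notin B(x,\rho)\,\}.
\end{equation*}
On $[t_0,\tau)$ both processes take values where $\be=\wt\be$, hence solve the same (localized) equation, so pathwise uniqueness forces $\cM_t=Y_t$ on $[t_0,\tau)$; in particular the ranges agree, $\cM[t_0,\tau)=Y[t_0,\tau)$. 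Since $x$ is interior to $B(x,\rho)$ and both paths are right-continuous with value $x$ at $t_0$, we have $\tau>t_0$ a.s.

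It remains to transfer the dimension lower bound. Because $\inf\wt\be\ge\be(x)-\ep$, the heat-kernel estimate \eqref{transition density} holds for $Y$ with $\al$ replaced by $\be(x)-\ep$, so Lemma \ref{theobounds} gives $\dim_\cH Y[t_0,t_0+h]\ge d\wedge(\be(x)-\ep)$ a.s. for each fixed $h\in(0,1-t_0]$ (the lower-bound argument only uses \eqref{transition density} for $t<1$ and hence applies on any such interval). Choosing $h=h_n\downarrow 0$, a.s. this holds for all $n$, and since $\tau>t_0$ a.s. some $t_0+h_n<\tau$, whence
\begin{equation*}
\dim_\cH \cM[t_0,1] \ge \dim_\cH \cM[t_0,\tau) = \dim_\cH Y[t_0,\tau) \ge \dim_\cH Y[t_0,t_0+h_n] \ge d\wedge(\be(x)-\ep).
\end{equation*}
The main obstacle I anticipate is making the localization-by-pathwise-uniqueness step fully rigorous: one must write both SDEs with coefficients truncated to $B(x,\rho)$, invoke pathwise uniqueness for the truncated equation, and verify that the stopped processes coincide up to $\tau$ even though the index functions $\be$ and $\wt\be$ disagree outside the ball. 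A secondary technical point is confirming that the lower bound of Lemma \ref{theobounds} genuinely transfers to the small \emph{random} interval $[t_0,\tau)$, which the Blumenthal-type exhaustion $h_n\downarrow 0$ together with $\tau>t_0$ is designed to handle.
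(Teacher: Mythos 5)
Your proposal is correct and follows essentially the same route as the paper: a Markov-property reduction to a fixed starting point $x$, a coupling with the stable-like process of index $\be\vee(\be(x)-\ep)$ driven by the same Poisson measure, pathwise agreement of the two processes up to a strictly positive stopping time, and transfer of the lower bound of Lemma \ref{theobounds} to the random interval via a countable monotone limit. The step you flag as your ``main obstacle'' (agreement up to $\tau$) is precisely what the paper proves explicitly, by a Burkholder--Davis--Gundy/Lipschitz/Gronwall estimate on the stopped $L^2$ difference, stopping strictly before the first large jump --- a point your spatial exit time handles implicitly, since any jump of size at least $1$ leaves $B(x,\ep/L)$; otherwise your index-based versus the paper's jump-time-based stopping time is only a cosmetic difference.
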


\begin{proof}   
For any $z\in\RR^d$, by the Markov property,
$$\PP^z(\dim_\cH (\cM[t_0,1])\ge \be(\cM_{t_0})| \cF_{t_0}) = g(\cM_{t_0}) \mbox{ a.s. }$$
where $$g(x) = \pp^x(\dim_\cH (M[0,1-t_0])\ge \be(x)).$$

Now one constructs a coupling with the process $\cM$.  Let $a\in(0,2)$, $x\in\RR^d$ and $\be_a(\cdot)=\be(\cdot)\vee a$. For each $\ep>0$ and any rational number $0<a\le \be(x)-2\e$, one introduces the process $\cM^{x,a}$, solution to the SDE
\begin{multline}\label{coupling} 
\cM^{x,a}_t= x + \int_0^t\int_{S^{d-1}}\int_0^1 \theta r^{1/\be_a(\cM^{x,a}_\sm)} \wn(\rd s, \rd\theta, \rd r) \\ +\int_0^t \int_{\s^d} \int_1^{+\infty} \theta r^{1/\be_a(\cM_{s-})} N(\rd s,\rd \theta, \rd r)
\end{multline}
driven by the same  Poisson random measure. 
Existence and pathwise uniqueness of these processes can be proved as in Proposition \ref{proexistence}.

Define the stopping times
\begin{align*}\tau_{x} = \inf\{t\ge 0 : \be(\cM_t)\le \be(x)-\ep \}, \\
  \tau_{x,a} = \inf\{t\ge 0 : \be(\cM^{x,a}_t)\le \be(x)-\ep \}. \nonumber
\end{align*}
Define also
\begin{align*}
\tau_{\ge 1} = \inf\{ 0\le t\le 1 :  N([0,t]\times[1,+\infty)) \ge 1 \}
\end{align*}
By the c\`adl\`ag property of the sample paths of $\cM^{x,a}$ and $\cM$, all these stopping times are strictly positive $\PP^x$-a.s..  Note that $\tau_{\ge 1}$ is an exponential random variable with finite parameter, it is also strictly positive $\PP^x$ almost surely.  Set $\tau = \min(\tau_x,\tau_{x,a}, \tau_{\ge 1})/2$.  The following observation is fundamental :  
\begin{equation}\label{claim} \PP^x \  a.s.\ \ \ \   \forall\,t\ge 0, \ \  \cM_{t\wedge \tau} = \cM^{x,a}_{t\wedge\tau}. 
\end{equation}
Indeed, for every $t\ge 0$,  using $\tau< \tau_{\ge 1}$,  one remarks that the large jump term is identically zero before time $\tau$ so that
\begin{align*}
\E^x \[ \Big|\cM^{x,a}_{t\wedge\tau}-\cM_{t\wedge\tau}\Big|^2 \] 
= \E\[ \lba \int_0^{t\wedge\tau}\int_{S^{d-1}}\int_0^1 \theta \( r^{1/\be_a(\cM^{x,a}_{s-})}-r^{1/\be(\cM_{s-})} \) \wn(ds d\theta dr)\rba ^2 \] .
\end{align*}
By Burkholder-Davis-Gundy inequality and Lipschitz continuity of $\be$,
\begin{align*}
&\E^x \[ \Big|\cM^{x,a}_{t\wedge\tau}-\cM_{t\wedge\tau}\Big|^2 \] \\
&\le C\E^x\[ \int_0^{t\wedge\tau}\int_{S^{d-1}}\int_0^1 \lba r^{1/\be_a(\cM^{x,a}_{s-})}-r^{1/\be(\cM_{s-})} \rba ^2 \frac{dr}{r^2}\,H(d\theta)\,ds \]  \\
&= C \E^x\[ \int_0^{t} \int_0^1 \lba r^{1/\be(\cM^{x,a}_{s-\wedge\tau})}-r^{1/\be(\cM_{s-\wedge\tau})} \rba ^2 \frac{dr}{r^2} ds\] \\
&\le C \E^x\[ \int_0^t |\cM^{x,a}_{s\wedge\tau}-\cM_{s\wedge\tau}|^2 ds \] \\
&= C\int_0^t \E^x [ |\cM^{x,a}_{s\wedge\tau}-\cM_{s\wedge\tau}|^2] ds, 
\end{align*}
 Hence, using Gronwall's Lemma,  for every $t\ge 0$,
\begin{equation}
\E^x\[ \lba \cM^a_{t\wedge \tau} - \cM_{t\wedge \tau} \rba ^2 \] =0.\nonumber
\end{equation}
This, along with the c\`adl\`ag property of the sample paths, yields \eqref{claim}.

To conclude, applying Lemma \ref{theobounds} (lower bound) to the stable-like  process $\cM^{x,a}$ with index function $\be_a$, we obtain that for each $t\in(0,1]$,  $$\PP^x\  a.s.\ \ \ \ \dim_\cH \cM^{x,a}([0,t])\ge \inf_{x\in\rr^d} \be_a(x) \wedge d \ge a\wedge d.$$   
This full probability set is indexed by $t$ and is non-decreasing as $t$ increases. Hence almost surely, for all $t\in (0,1]$ and all rational $a\in(1,\be(x)-2\e)$,  one has  $\dim_\cH \cM^{x,a}([0,t])\ge a\wedge d.$ One deduces that $\PP^x$ a.s. $$\dim_\cH \cM([0,1-t_0]) \ge \dim_\cH \cM([0,\tau\wedge (1-t_0)])= \dim_\cH \cM^{x,a}([0, \tau \wedge (1-t_0)]) \ge a\wedge d,$$ where we used \eqref{claim} for the equality and the fact that $\PP^x$ a.s. $\tau>0$ for the last inequality. Letting $a\to\be(x)-2\ep$ along a countable sequence, then letting $\e\to 0$,  one obtains that 
$$g(x) \equiv 1.$$
One concludes with $\pp^z(\dim_\cH \cM([t_0,1])\ge \be(\cM_{t_0})) = \E^z[g(\cM_{t_0})] = 1$. This completes the proof.

\end{proof}

Finally, we prove the lower bound in Theorem \ref{theorange}.
\begin{proof} Using Lemma \ref{lemmalower}, we have for each $t_0\in[0,1)$ that $\PP^z$ a.s. $$\dim_\cH \big(\cM([0,1])\big) \ge \dim_\cH \big( \cM([t_0,1])\big) \ge \be(\cM_{t_0})\wedge d,$$ then $\PP^z$ a.s.
$$\dim_\cH \Big(\cM([0,1])\Big) \ge \sup_{t_0\in[0,1)\cap\mathbb{Q}} \be(\cM_{t_0})\wedge d = \sup_{t_0\in[0,1]} \be(\cM_{t_0}) \wedge d, $$
where we used the c\`adl\`ag property of the sample paths. Since this holds for any $z\in\RR^d$, one deduce that the lower bound holds for any measurable $\cM_0\in\cF_0$.
\end{proof}

\section[Proof of Theorem 3.2]{Dimension of the graph of $\cM$ : proof of Theorem \ref{theograph}}

\subsection{Case $d\ge 2$}\label{secd>1} Since a projection never increases the dimension of a subset of $\rr^d$, projecting the graph on the time axis then on the space axis yields the announced lower bound for dimension of the graph. 

It remains us to prove the other inequality. Recall that $\be^*_M = \sup_{t\in[0,1]}\be(\cM_t)$. For every $p>\max(1,\be^*_M) \ge \be^*_M = \be^*_M\wedge d$, consider the $p$-variation of the process $\cG(t)=(\mbox{Id}(t),\cM_t)$ in $\rr^{d+1}$, where $\mbox{Id}(t)=t$.  As $W_p(\mbox{Id},[0,1]) \le 1$ for every $p>1$, there exists a constant $C=C(d)$ such that
\begin{align*}
W_p(\cG,[0,1]) \le C (1+W_p(\cM,[0,1])) < +\infty.
\end{align*}
by Lemma \ref{lemmaglobalvar}. Applying \eqref{variation2dimension} yields the desired upper bound.

\subsection{Case $d=1$} The proof is split into several parts. The first one gives an upper bound for the upper box-counting dimension of the graph of $\cM$, which in turn gives an upper bound for the Hausdorff dimension of the graph.  Recall that the upper box-counting dimension of a set $E\subset\RR^d$ is defined by
$$\overline{\dim_B}(E)=\limsup_{\de\downarrow 0}\frac{\log N_\de(E)}{-\log\de},$$
where $N_{\de}(E)$ is the smallest number of sets of diameter at most $\de$ to cover $E$, 
 see Chapter 3 of  \cite{falconer2003}.  The proof is quite standard, see for instance \cite{norvaisa2002pvariation}. We prove it for completeness.  

\begin{proposition}\label{lemmauppergraph} Almost surely, $$\dim_\cH(Gr_{[0,1]}(\cM))\le \overline{\dim_B}(Gr_{[0,1]}(\cM))\le \max\( 1,2-\frac{1}{\be_M^*}\) .$$
\end{proposition}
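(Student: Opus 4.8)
The plan is to prove the box-counting bound by a direct covering argument that feeds in the finiteness of the $p$-variation of $\cM$ established in Lemma \ref{lemmaglobalvar}; the first inequality $\dim_\cH \le \overline{\dim_B}$ holds for every set (see \cite{falconer2003}), so it comes for free and the whole task reduces to bounding $\overline{\dim_B}(Gr_{[0,1]}(\cM))$. I would work on the almost sure event furnished by Lemma \ref{lemmaglobalvar}, on which $W_p(\cM,[0,1])<+\infty$ simultaneously for every $p>\be^*_M$. It then suffices to show that on this event one has $\overline{\dim_B}(Gr_{[0,1]}(\cM))\le 2-1/p$ for each fixed $p>\be^*_M\vee 1$, and to conclude by letting $p$ decrease to $\be^*_M\vee 1$.

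For such a $p$ and a scale $\de\in(0,1)$, I would partition $[0,1]$ into $N=\lceil 1/\de\rceil$ consecutive intervals $I_1,\dots,I_N$ of length at most $\de$ and set $\mathrm{osc}_i=\sup_{s,t\in I_i}|\cM_s-\cM_t|$. The part of the graph lying above $I_i$ fits inside a rectangle of width $\de$ and height $\mathrm{osc}_i$, hence is covered by at most $\mathrm{osc}_i/\de+1$ squares of side $\de$, giving
\[
N_\de\big(Gr_{[0,1]}(\cM)\big)\le \frac{1}{\de}\sum_{i=1}^N \mathrm{osc}_i + N .
\]
The key step is to estimate $\sum_i\mathrm{osc}_i$ through the $p$-variation. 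For any $s,t\in I_i$ the two-point partition yields $|\cM_s-\cM_t|^p\le W_p(\cM,I_i)$, so $\mathrm{osc}_i^p\le W_p(\cM,I_i)$, while the super-additivity of the $p$-variation over adjacent intervals gives $\sum_i W_p(\cM,I_i)\le W_p(\cM,[0,1])$. Applying H\"older's inequality (valid since $p\ge 1$) then produces
\[
\sum_{i=1}^N \mathrm{osc}_i \le N^{1-1/p}\Big(\sum_{i=1}^N \mathrm{osc}_i^p\Big)^{1/p}\le N^{1-1/p}\,W_p(\cM,[0,1])^{1/p}.
\]

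Plugging this in, using $N\le 2/\de$ and $2-1/p\ge 1$, I obtain $N_\de(Gr_{[0,1]}(\cM))\le C\,\de^{-(2-1/p)}$ with $C$ depending only on $p$ and $W_p(\cM,[0,1])$, whence $\overline{\dim_B}(Gr_{[0,1]}(\cM))\le 2-1/p$ after taking logarithms and letting $\de\downarrow 0$. Letting $p\downarrow \be^*_M\vee 1$ and using the continuity of $p\mapsto 2-1/p$ gives $\overline{\dim_B}(Gr_{[0,1]}(\cM))\le 2-1/(\be^*_M\vee 1)=\max(1,2-1/\be^*_M)$ on the full-probability event, as claimed. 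The only delicate point is the oscillation estimate $\mathrm{osc}_i^p\le W_p(\cM,I_i)$ combined with super-additivity for c\`adl\`ag paths; this is where jumps could a priori cause trouble, but the jumps of $\cM$ are already captured by the $p$-variation, since a partition with a point approaching a jump time recovers the jump size, so no separate treatment is needed. This is the main (mild) obstacle, the remainder being the routine covering computation.
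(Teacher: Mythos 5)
Your proposal is correct and follows essentially the same route as the paper: bound the box dimension by covering the graph over a partition of $[0,1]$ with squares, estimate the number of squares via oscillations, control oscillations by $p$-variation using H\"older's inequality and super-additivity, invoke Lemma \ref{lemmaglobalvar}, and let $p$ decrease. The only (harmless) difference is that by taking $p>\be_M^*\vee 1$ throughout you avoid the paper's separate treatment of the event $\{\be_M^*<1\}$, where the paper instead uses the finite-variation/projection argument.
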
 
\begin{proof}
The left inequality is a general fact, see \cite[page 46]{falconer2003}. Let us prove   the right inequality. 

If the event $\{ \be_M^*<1\}$ is realized, Lemma \ref{lemmaglobalvar} yields that the process $\cM$ has finite variation, {\it a fortiori}, the graph process $\cG$ has finite variation. Hence the dimension of the graph of $\cM$ (which is the range of $\cG$) is $1$  by the projection argument used in Section \ref{secd>1} and  \eqref{variation2dimension}. The desired inequality is straightforward.

If $\{\be_M^*\ge 1\}$ is realized, we consider $p>\be_M^*\ge 1$ and relate the upper box dimension with the $p$-variation of the process. Denote the oscillation of the process $\cM$ in the dyadic interval $[k2^{-j}, (k+1)2^{-j}]$ by $$Osc(\cM,I_{j,k}) := \sup \{ |\cM_s -\cM_t| : s,t\in I_{j,k} \}.$$ For every $k$, $Gr_{I_{j,k}}(\cM)$ can be covered by at most $2^j Osc(\cM,I_{j,k})+2$ squares of side length $2^{-j}$. The number of squares of generation $j$ required to cover the graph $Gr_{[0,1]}(\cM)$ satisfies
\begin{align*}
N_j &= \sum_{k=0}^{2^j-1} \big( 2^j Osc(\cM,I_{j,k})+2\big) \le 2^j \sum_{k=0}^{2^j-1} W_p(\cM,I_{j,k})^{\frac{1}{p}}+ 2\cdot 2^j \\ 
&\le 2^j \( \sum_{k=0}^{2^j-1}W_p(\cM,I_{j,k}) \) ^{\frac{1}{p}} (2^j)^{1-\frac{1}{p}} + 2\cdot 2^j \\ 
&\le  2\cdot 2^{j(2-\frac{1}{p})} W_p(\cM,[0,1])^{\frac{1}{p}}
\end{align*}
for all $j$ large enough, where we used H\"older inequality for the second inequality. Therefore, $$\ \overline{\dim_B}(Gr_{[0,1]}(\cM)) \le \limsup_{j\to\infty} \frac{\log N_j}{\log 2^j} \le 2- \frac{1}{p},$$
where we used $W_p(\cM,[0,1])<\infty$. Letting $p\to \be_M^*$ yields the result.
\end{proof}

\sk
The rest of this section is devoted to prove the lower bound in Theorem \ref{theograph}, 
\begin{align*}
\dim_\cH\big(Gr_{[0,1]}(\cM) \big) \ge 1\vee \Big( 2-\frac{1}{\be_M^*} \Big).
\end{align*} 
To prove it, we give a deterministic lower bound for the dimension of graph.  This should be viewed as an analogue of Lemma \ref{theobounds} (the lower bound part) in the graph context.

\begin{proposition}\label{lemmalowergraph} Denote $\underline{\be}=\inf_{x\in\rr^d}\be(x)$. Almost surely,  
\begin{equation}\label{eq:prop3} \dim_\cH\big(Gr_{[0,1]}(\cM) \big) \ge 1\vee \big( 2-\frac{1}{\underline{\be}}\big).
\end{equation}
\end{proposition}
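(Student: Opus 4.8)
The plan is to establish this deterministic lower bound by the energy (Frostman) method applied to the natural occupation measure on the graph. Let $\mu$ be the image of Lebesgue measure $\la|_{[0,1]}$ under the map $t\mapsto(t,\cM_t)$; this is a Borel probability measure carried by $Gr_{[0,1]}(\cM)$. Recall that if, for some $\ga>0$, the $\ga$-energy
\begin{align*}
I_\ga(\mu) = \iint \frac{\mu(dx)\,\mu(dy)}{|x-y|^\ga}
\end{align*}
is finite, then $\dim_\cH(Gr_{[0,1]}(\cM))\ge\ga$. Hence it suffices to show $\EE[I_\ga(\mu)]<\infty$ for every $\ga<2-\frac{1}{\underline{\be}}$ in the regime $\underline{\be}>1$; this forces $I_\ga(\mu)<\infty$ almost surely and yields the claimed bound after letting $\ga\uparrow 2-\frac{1}{\underline{\be}}$ along a countable sequence. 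The case $\underline{\be}\le 1$ needs no computation: projecting the graph onto the time axis is $1$-Lipschitz and maps it onto $[0,1]$, so $\dim_\cH(Gr_{[0,1]}(\cM))\ge 1 = 1\vee(2-\frac{1}{\underline{\be}})$.

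First I would rewrite the expected energy as a double integral in time,
\begin{align*}
\EE[I_\ga(\mu)] = \int_0^1\!\!\int_0^1 \EE\Big[\big((t-s)^2+|\cM_t-\cM_s|^2\big)^{-\ga/2}\Big]\,ds\,dt,
\end{align*}
and restrict by symmetry to $s<t$, writing $h=t-s$. Conditioning on $\cF_s$ and using the Markov property together with the uniform heat-kernel bound \eqref{transition density}, namely $p(h,x,y)\le C h^{-1/\underline{\be}}$ (with $d=1$ and $\al=\underline{\be}$), the conditional expectation is bounded, uniformly in the random starting point $\cM_s$, by
\begin{align*}
\int_{\rr} \frac{p(h,\cM_s,y)}{(h^2+|y-\cM_s|^2)^{\ga/2}}\,dy \le C\,h^{-1/\underline{\be}}\int_{\rr}\frac{dz}{(h^2+z^2)^{\ga/2}}.
\end{align*}
The scaling $z=hw$ gives $\int_\rr(h^2+z^2)^{-\ga/2}\,dz = h^{1-\ga}\int_\rr(1+w^2)^{-\ga/2}\,dw$, and the last integral is a finite constant precisely when $\ga>1$; thus for $\ga>1$ one obtains $\EE[(h^2+|\cM_t-\cM_s|^2)^{-\ga/2}]\le C\,h^{1-\ga-1/\underline{\be}}$. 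Integrating over the triangle $\{0<s<t<1\}$ reduces to $\int_0^1 h^{1-\ga-1/\underline{\be}}\,dh$, which converges exactly when $\ga<2-\frac{1}{\underline{\be}}$. Therefore $\EE[I_\ga(\mu)]<\infty$ for every $\ga\in(1,\,2-\frac{1}{\underline{\be}})$, a range that is nonempty precisely when $\underline{\be}>1$, which finishes the proof.

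The main obstacle is the interplay of the two exponent constraints produced by the computation: the spatial integral converges only for $\ga>1$, while the time integral converges only for $\ga<2-\frac{1}{\underline{\be}}$. These are simultaneously fulfillable exactly in the regime $\underline{\be}>1$, which is precisely when $2-\frac{1}{\underline{\be}}>1$ and the energy method improves on the trivial projection bound. The one point demanding care is that the heat-kernel estimate must be invoked uniformly in the (random) base point $\cM_s$, which is exactly the form of the quoted bound \eqref{transition density}; everything else is a routine scaling and Fubini argument.
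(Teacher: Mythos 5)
Your proof is correct, and it takes a genuinely different route from the paper's. You use the classical Frostman/energy method: the expected $\ga$-energy of the graph occupation measure $\mu$ is computed by conditioning on $\cF_s$, invoking the Markov property and the uniform heat-kernel bound \eqref{transition density} (with $d=1$, $\al=\underline{\be}$), which gives finiteness of $\EE[I_\ga(\mu)]$ exactly for $\ga\in(1,2-\frac{1}{\underline{\be}})$, and the dimension bound follows by letting $\ga$ increase to the critical value along a countable sequence; the case $\underline{\be}\le 1$ is handled by projection, exactly as in the paper. The paper instead follows Pruitt--Taylor: it proves an exponential tail estimate for the sojourn times $T_{t_0}(a,s)$ (Lemma~\ref{taillemma}, obtained by iterating the Markov property to control all moments, again via \eqref{transition density}), upgrades this through Borel--Cantelli and a Fubini argument to an a.s.\ local density estimate for the same occupation measure $\mu$, and concludes with Taylor's density lemma (Lemma~\ref{densitylemma}). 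Both arguments thus rest on the same two ingredients --- the Markov property and the uniform bound $p(t,x,y)\le Ct^{-1/\underline{\be}}$ --- and exhibit the same dichotomy: your constraint $\ga>1$ from the spatial integral mirrors the paper's restriction to $\underline{\be}>1$. What the paper's longer route buys is a strictly finer conclusion: it yields $\cH^\ph\big(Gr_{[0,1]}(\cM)\big)\ge 1/C>0$ for the gauge $\ph(x)=\log(1/x)\,x^{2-1/\underline{\be}}$, i.e.\ positive generalized Hausdorff measure at the critical gauge, whereas the energy method only gives the dimension bound. Your route is shorter, more standard, and fully sufficient for Proposition~\ref{lemmalowergraph} as stated; the only points worth making explicit are that the graph is a Borel set carrying $\mu$, that Tonelli justifies exchanging expectation with the double time integral, and that the energy criterion you quote is the classical one (e.g.\ Theorem 4.13 in \cite{falconer2003}).
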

We prove this proposition in several steps. First we adapt the ideas in \cite{pruitt1969stablecomponents} to give tail estimates for the sojourn time of $\cM$. This allows to understand the local behavior of the graph occupation measure. Then we use a density argument to obtain the lower bound for the Hausdorff dimension of the graph of $\cM$.  

Following Pruitt-Taylor \cite{pruitt1969stablecomponents}, we define the sojourn time of $\cM$ in the ball centered at $\cM_{t_0}$ with radius $a>0$, during the time interval $[t_0,t_0+s]$ for $0<s<1-t_0$ as
\begin{align*}
T_{t_0}(a,s) = \int_{t_0}^{t_0+s} \indiq_{|\cM_t-\cM_{t_0}|\le a} dt. 
\end{align*}
Write for simplicity $T(a,s)=T_0(a,s)$. The main estimate is the following.
\begin{lemma}\label{taillemma} Fix $t_0\in[0,1)$. Assume that $\underline{\be}>1$ and let $C=\frac{2}{1-1/\underline{\be}}$. For every $0<s\le 1-t_0$, $\la>0$, $a>0$, one has
\begin{align*}
\pp(T_{t_0}(a,s)\ge \la a s^{1-\frac{1}{\underline{\be}}}) \le e^{-\la/2C}
\end{align*}
\end{lemma}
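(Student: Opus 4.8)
The plan is to prove the lemma by the classical Kac moment method: control every moment of the sojourn time uniformly in the starting point, and then convert those moment bounds into a subexponential tail through the Laplace transform. As a preliminary reduction, I would pass to the case $t_0=0$. Since $\cM$ is a time-homogeneous (strong) Markov process, conditioning on $\cF_{t_0}$ shows that, given $\cM_{t_0}=y$, the variable $T_{t_0}(a,s)$ has the same law as $T_0(a,s)=\int_0^s \indiq_{|\cM_t-y|\le a}\,dt$ computed under $\PP^y$. Hence it suffices to bound $\sup_y \PP^y\big(T_0(a,s)\ge \lambda a s^{1-1/\underline{\be}}\big)$, and for this it is enough to control $\sup_y \EE^y[T_0(a,s)^n]$ uniformly in $y$, for every $n$.

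The engine is a one-step estimate coming from the heat-kernel bound \eqref{transition density}. For $d=1$ and any $x,y\in\rr$, $u>0$, one has $\PP^y(|\cM_u-x|\le a)=\int_{x-a}^{x+a}p(u,y,z)\,dz\le 2C_0\,a\,u^{-1/\underline{\be}}$, where $C_0$ is the constant of \eqref{transition density}; note that this bound is uniform both in the starting point $y$ and in the center $x$. Here the assumption $\underline{\be}>1$ is crucial: it gives $1/\underline{\be}<1$, so that $\int_0^s u^{-1/\underline{\be}}\,du=\frac{s^{1-1/\underline{\be}}}{1-1/\underline{\be}}<\infty$, and therefore $\sup_{x,y}\int_0^s \PP^y(|\cM_u-x|\le a)\,du\le \mu$, where $\mu:=\frac{2C_0}{1-1/\underline{\be}}\,a\,s^{1-1/\underline{\be}}$; this is the quantity $C a s^{1-1/\underline{\be}}$ of the statement once $C_0$ is absorbed into $C=\frac{2}{1-1/\underline{\be}}$.

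Next I would run the moment iteration. Writing $T_0(a,s)^n=n!\int_{0\le t_1\le\cdots\le t_n\le s}\prod_{i=1}^n \indiq_{|\cM_{t_i}-y|\le a}\,dt$, I peel off the largest time $t_n$ and apply the Markov property at $t_{n-1}$ together with the uniform one-step bound above; this yields the recursion $\EE^y[T_0(a,s)^n]\le n\,\mu\,\EE^y[T_0(a,s)^{n-1}]$, uniformly in $y$, and hence $\sup_y \EE^y[T_0(a,s)^n]\le n!\,\mu^n$. Summing the exponential series then gives $\EE^y[e^{\gamma T_0(a,s)}]\le (1-\gamma\mu)^{-1}$ for $\gamma\mu<1$; choosing $\gamma=1/(2\mu)$ and applying Markov's inequality at the level $\lambda a s^{1-1/\underline{\be}}=(\lambda/C)\mu$ produces the announced bound $e^{-\lambda/2C}$, up to the harmless multiplicative factor $(1-\gamma\mu)^{-1}=2$ coming from the geometric series, which is consistent with the convention that $C$ may change from line to line.

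I expect the main obstacle to be the moment iteration carried out \emph{uniformly} in the starting point $y$: the bound on the innermost time integral must not depend on the trajectory up to $t_{n-1}$, which is precisely why it is essential that the heat-kernel estimate \eqref{transition density} be uniform in $(x,y)$. The only other delicate point is the integrability of $u^{-1/\underline{\be}}$ near $u=0$, which both forces and is supplied by the standing hypothesis $\underline{\be}>1$ in this regime.
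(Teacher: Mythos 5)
Your proof is correct and takes essentially the same route as the paper: the Kac/Pruitt--Taylor moment method (reduction to $t_0=0$ by the Markov property, iteration $\EE^y[T^n]\le n\,\mu\,\EE^y[T^{n-1}]$ via the heat-kernel bound \eqref{transition density}, then exponential Chebyshev with $u\asymp 1/\mu$). The only cosmetic difference is that the paper re-centers the ball at $\cM_{t_{n-1}}$ by the triangle inequality and so iterates with $T(2a,s)$, whereas you exploit uniformity of the kernel bound in the ball's center to keep radius $a$; both give the same recursion, and your explicit factor $2$ in front of the exponential (which the paper silently drops, together with the kernel constant) is indeed harmless for the Borel--Cantelli application.
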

\begin{proof}
Recall the notations $\PP^x$, $\EE^x$ in the proof of Lemma \ref{lemmalower}. By the Markov property,
\begin{align*}
\PP(T_{t_0}(a,s)\ge \la a s^{1-\frac{1}{\underline{\be}}}|\cF_{t_0})=g(M_{t_0}) \mbox{ a.s. }
\end{align*}
where $g(x)=\PP^x(T(a,s)\ge \la a s^{1-\frac{1}{\underline{\be}}})$. It suffices to prove that the upper tail estimate
\begin{align*}
\pp^x(T(a,s)\ge \la a s^{1-\frac{1}{\underline{\be}}}) \le e^{-\la/2C}
\end{align*}
holds uniformly for all $x\in\RR^1$. To do so, let us compute the moment generating function of the sojourn time. First, we study its $n$-th moment for all $n\ge 2$. For every $k\in\nn$ and $s\in\rr^+$, let $$\Gamma_k=\Gamma_k(s)=\{(t_1,\ldots,t_k)\in [0,s]^k : 0\le t_1\le \cdots \le t_k\le s \}.$$ Applying Fubini Theorem,
\begin{align*}
&\E^x[T(a,s)^n]\\
&= \int_0^s\cdots\int_0^s \pp^x\big( \bigcap_{i=1}^n \{|\cM_{t_i}|\le a\} \big) \rd t_1\cdots \rd t_n \\
&= n! \int_{\Gamma_n} \pp^x\( \bigcap_{i=1}^{n} \{|\cM_{t_i}|\le a\} \) \rd t_1\cdots \rd t_{n} \\
&\le n! \int_{\Gamma_n} \pp^x\( \bigcap_{i=1}^{n-1} \{|\cM_{t_i}|\le a\}, |\cM_{t_n}-\cM_{t_{n-1}}|\le 2a \) \rd t_1\cdots \rd t_n\\ 
\end{align*}
which, by the Markov property, is equal to
\begin{multline*}
n! \int_{\Gamma_n} \E^x\[ \pp^x\( \bigcap_{i=1}^{n-1} \{|\cM_{t_i}|\le a\}| \cF_{t_{n-1}} \)
\pp^{\cM_{t_{n-1}}}\( |M_{t_n} -M_{t_{n-1}}|\le 2a \) \] 
\rd t_1\cdots \rd t_n.
\end{multline*}
Integrating over $t_n$, then pull out the conditional first moment of the sojourn time, one gets the upper bound 
\begin{align*}
&\E^x[T(a,s)^n]\\
&\le  n! \int_{\Gamma_{n-1}} \E^x\[ \pp^x\( \bigcap_{i=1}^{n-1} \{|\cM_{t_i}|\le a\}| \cF_{t_{n-1}}  \) \E^{\cM_{t_{n-1}}}[T(2a,s-t_{n-1})] \] \  dt_1\cdots dt_{n-1} \\
&\le n\cdot \( \sup_{x\in\rr^d}\E^x[T(2a,s)]\) \cdot (n-1)!\int_{\Gamma_{n-1}} \hspace{-3mm}\pp^x\( \bigcap_{i=1}^{n-1} \{|\cM_{t_i}|\le a\} \) dt_1\cdots dt_{n-1} \\
&=  n\cdot \( \sup_{x\in\rr^d}\E^x[T(2a,s)]\) \cdot \E^x[T(a,s)^{n-1}].
\end{align*}
We iterate this procedure to get 
\begin{align*}
\E^x[T(a,s)^n] \le n! \( \sup_{x\in\rr^d} \E^x[T(2a,s)] \) ^n.
\end{align*}
Thus for all $u> 0$, the exponential moment of $T(a,s)$ is  bounded from above by
\begin{align}\label{eq:lem8}
\E^x[e^{uT(a,s)}] = \sum_{n=0}^{+\infty} \frac{u^n}{n!}\E^x[T(a,s)^n] \le \sum_{n=0}^{+\infty} \( u \sup_{x\in\rr^d} \E^x[T(2a,s)] \) ^n.
\end{align}
 Applying  the density estimate \eqref{transition density} yields for all $x\in\rr^d$, $0<s<1$,
\begin{align}\label{eq:lem8_1}
\E^x [T(2a,s)] = \int_0^s \pp^x (|M_t-x|\le 2a ) dt \le \int_0^s t^{-1/\underline{\be}}\cdot 2a \,dt \le C \cdot as^{1-\frac{1}{\underline{\be}}}
\end{align}
with $C=\frac{2}{1-1/\underline{\be}}$, recalling that here $d=1$.
Finally we choose $$ u=\frac{1}{2\sup_{x\in\rr^d} \E^x[T(2a,s)]}$$ 
so that the exponential moment \eqref{eq:lem8} is bounded above by $1$.  Consequently, using the bound \eqref{eq:lem8_1} and the Markov inequality yields that
\begin{align*}
\pp^x(T(a,s)\ge \la a s^{1-\frac{1}{\underline{\be}}}) &\le e^{-u\la as^{1-\frac{1}{\underline{\be}}}} \E^x[e^{uT(a,s)}] \le e^{-\frac{\la}{2C}}.
\end{align*}
Since the estimate is uniform in $x$, the proof is complete. 
\end{proof}

The following density lemma is useful for our purpose. Comparing to the usual mass distribution principle (\cite[page 60]{falconer2003}),  it is the semi-time interval $[t,t+h]$ that is used rather than $[t-h,t+h]$ in order to apply the Markov property. We refer to \cite[Lemma 4]{taylor1966} for a proof.  Recall that the Hausdorff measure of $E$ related to the gauge function $\ph$ is defined by $\cH^\ph(E)=\lim_{\de\downarrow 0}\cH^\ph_\de(E)$ where
\begin{align*}
\cH^\ph_\de(E)= \inf\left\lbrace\sum_{i} \ph({\rm diam}(Q_i)): E\subset\bigcup_{i} Q_i \mbox{ with } {\rm diam}(Q_i)\le \de\right\rbrace
\end{align*}
and $\ph:\RR_+\to\RR_+$ is an increasing function satisfying $\ph(2x)\le K\ph(x)$ around zero for some finite constant $K$.

\begin{lemma}[\cite{taylor1966}]\label{densitylemma} Suppose that $\nu$ is a probability measure supported on $E\subset[0,1]\times\rr$ such that for $\nu$-almost every $(t,x)$,
$$\limsup_{h\to 0} \frac{\nu\( [t,t+h]\times[x-h,x+h]\) }{\ph(h)}\le C <+\infty.$$
Then $$\cH^{\ph}(E)\ge \frac{1}{C}.$$
\end{lemma}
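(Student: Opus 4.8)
The plan is to prove this by a mass distribution principle adapted to the asymmetric, forward-in-time neighborhoods appearing in the hypothesis. The general principle runs as follows: if every small covering set $Q$ meeting the support carries $\nu$-mass at most $(C+\ep)\ph(\mathrm{diam}(Q))$, then any admissible cover $\{Q_i\}$ of $E$ satisfies $1=\nu(E)\le\sum_i\nu(Q_i)\le(C+\ep)\sum_i\ph(\mathrm{diam}(Q_i))$, whence $\cH^\ph_\de(E)\ge 1/(C+\ep)$ and the conclusion follows on letting $\ep\to 0$. The only genuine difference here is that the hypothesis controls $\nu$ on the \emph{forward rectangle} $[t,t+h]\times[x-h,x+h]$ rather than on a centred square, so the work is to engulf an arbitrary small covering set inside one such forward rectangle based at a point where the bound is known to hold, and to do so without losing the sharp constant.

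First I would localise the hypothesis. For a fixed $\ep>0$, $\nu$-almost every $(t,x)$ admits a threshold $h_0(t,x)>0$ such that $\nu([t,t+h]\times[x-h,x+h])\le(C+\ep)\ph(h)$ for all $0<h\le h_0(t,x)$. Setting $E_n=\{(t,x):h_0(t,x)\ge 1/n\}$ gives an increasing family with $\nu(E_n)\uparrow 1$, so I may fix $n$ with $\nu(E_n)>1-\ep$; by inner regularity of the Borel probability measure $\nu$ I then select a \emph{compact} set $K\subseteq E_n$ with $\nu(K)>1-2\ep$. Compactness is precisely what will let me take genuine minima of the time coordinate in the next step.

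Next I would run the covering estimate against $K$. Let $\{Q_i\}$ be any cover of $E$ by sets of diameter $\rho_i=\mathrm{diam}(Q_i)\le\de$ with $\de<1/n$; replacing each $Q_i$ by its closure does not change its diameter, so I may assume each $Q_i$ closed, whence $Q_i\cap K$ is compact. For each $i$ with $Q_i\cap K\neq\emptyset$, choose the point $(\tau_i,y_i)\in Q_i\cap K$ whose time coordinate $\tau_i$ is \emph{minimal} (attained by compactness) — this is where the forward structure is exploited. Since every point of $Q_i\cap K$ lies within $\rho_i$ of $(\tau_i,y_i)$ in each coordinate and has time coordinate at least $\tau_i$, one gets $Q_i\cap K\subseteq R_i:=[\tau_i,\tau_i+\rho_i]\times[y_i-\rho_i,y_i+\rho_i]$, a forward rectangle of parameter $h=\rho_i$. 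Because $(\tau_i,y_i)\in K\subseteq E_n$ and $\rho_i<1/n\le h_0(\tau_i,y_i)$, the hypothesis applies verbatim and yields $\nu(Q_i\cap K)\le\nu(R_i)\le(C+\ep)\ph(\rho_i)$. Summing and using $K\subseteq\bigcup_i Q_i$ gives $\nu(K)\le\sum_i\nu(Q_i\cap K)\le(C+\ep)\sum_i\ph(\rho_i)$, so $\sum_i\ph(\rho_i)\ge\nu(K)/(C+\ep)$. Taking the infimum over admissible covers yields $\cH^\ph_\de(K)\ge\nu(K)/(C+\ep)$; letting $\de\to 0$ and using monotonicity $\cH^\ph(E)\ge\cH^\ph(K)\ge(1-2\ep)/(C+\ep)$, and finally $\ep\to 0$ produces $\cH^\ph(E)\ge 1/C$.

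The main obstacle is the asymmetry of the neighborhoods, not the measure-theoretic bookkeeping: a forward rectangle based at a typical point of $Q_i$ need not contain $Q_i$, whereas using a centred square would cost an extra doubling factor $K$ (from $\ph(2x)\le K\ph(x)$) and destroy the sharp constant $1/C$. Choosing the base point to minimise the time coordinate is exactly what makes the forward rectangle of parameter $\mathrm{diam}(Q_i)$ — rather than a multiple of it — swallow $Q_i\cap K$, so no doubling of $\ph$ is incurred. The passage to the compact set $K$ is the device guaranteeing this minimiser exists; without it one would have only an infimal time coordinate and would need an extra $\eta$-slack together with right-continuity or doubling of $\ph$ to close the estimate.
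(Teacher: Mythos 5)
Your argument is correct. Note first that the paper contains no proof of this lemma to compare against: it is quoted from Taylor, with the proof deferred to \cite[Lemma 4]{taylor1966}. What you have written is a correct self-contained proof, and it is the classical mass-distribution/density argument adapted to one-sided time neighborhoods. You also correctly isolated the one genuinely non-routine point: after passing to closed covering sets $Q_i$ and a compact set $K$ of nearly full measure on which the density bound holds uniformly, you base the forward rectangle at the point of $Q_i\cap K$ with \emph{minimal} time coordinate, so that the rectangle with parameter $h=\mathrm{diam}(Q_i)$ --- and not a dilate of it --- contains $Q_i\cap K$. This is exactly what preserves the sharp constant $1/C$ and avoids invoking the doubling condition $\ph(2x)\le K\ph(x)$; it is also why the lemma is stated with $[t,t+h]$ rather than $[t-h,t+h]$, the one-sided form being what the Markov property in the paper's application can control.

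Three small points to tighten, none of which is a genuine gap. (i) Choose $K$ inside $E_n\cap E$ (or $E_n\cap\mathrm{supp}\,\nu$), so that $K\subseteq E$: this is what justifies both $\nu(K)\le\sum_i\nu(Q_i\cap K)$ for a cover $\{Q_i\}$ of $E$ and the final monotonicity step $\cH^{\ph}(E)\ge\cH^{\ph}(K)$; as written, your $K$ need not lie in $E$. (ii) Inner regularity requires $E_n$ to be $\nu$-measurable, which deserves a remark: for fixed $h$ the map $(t,x)\mapsto\nu\bigl([t,t+h]\times[x-h,x+h]\bigr)$ is upper semicontinuous (closed rectangles, finite measure), and it is monotone and right-continuous in $h$, so the defining condition of $E_n$ can be expressed through countably many Borel conditions. (iii) Covering sets with $\mathrm{diam}(Q_i)=0$ fall outside the hypothesis, which only addresses $h>0$; this is harmless since (when $\ph(0+)=0$) the density hypothesis forbids atoms of $\nu$ at points of the good set, so singletons carry no $\nu$-mass and may be discarded from the sum.
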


\begin{preuve}{\it \ of Proposition \ref{lemmalowergraph} : } 
Observe that the right hand side term in \eqref{eq:prop3} is $1$ when $\underline{\be}\le 1$. Using again  the fact that the projection of a set in $\RR^2$ to any line does not increase the Hausdorff dimension, we see that $\dim_\cH(Gr_{[0,1]}(\cM))\ge \dim_\cH([0,1])=1$, as desired.

Now consider $\underline{\be}>1$. For any $t_0\in[0,1)$,  Lemma \ref{taillemma} applied to $a = s= 2^{-m}$ and $\la= m$ so that $2^{-m}<1-t_0$ yields that
\begin{align*}
\pp\Big(T_{t_0}(2^{-m},2^{-m})\ge m 2^{-m(2-1/\underline{\be})}\Big) \le e^{-m/2C}.
\end{align*} 
We deduce using the Borel-Cantelli Lemma that a.s. for all $m$ large enough,
$$T_{t_0}(2^{-m},2^{-m})\le m 2^{-m(2-1/\underline{\be})}.$$ For all $a$ small enough, let $m$ be the unique integer such that $2^{-m-1}\le a<2^{-m}$. Then 
$$\frac{T_{t_0}(a,a)}{\log(1/a)a^{2-1/\underline{\be}}}\le \frac{T_{t_0}(2^{-m},2^{-m})}{(\log 2)m2^{-m(2-1/\underline{\be})}}   \frac{2^{-m(2-1/\underline{\be})}}{2^{-(m+1)(2-1/\underline{\be})}} \le C$$
where $C$ is a positive finite constant independent of $m$. Thus for any $t_0\in[0,1)$,  a.s.
\begin{align}\label{localexpoofmu}
\limsup_{a\to 0} \frac{T_{t_0}(a,a)}{\log(1/a)a^{2-1/\underline{\be}}}\le C.
\end{align}
Consider the probability measure $\mu$, defined by $\ds \mu(A):= \int_0^1 \indiq_A(t,\cM_t) dt$ whose support is  the graph $Gr_{[0,1]}(\cM)$. The estimate  \eqref{localexpoofmu}  yields that for any fixed $t_0\in[0,1)$,
\begin{align*}
\limsup_{a\to 0}\frac{\mu([t_0,t_0+a]\times[\cM_{t_0}-a,\cM_{t_0}+a])}{\log(1/a)a^{2-1/\underline{\be}}} \le C \quad \mbox{ a.s. }
\end{align*}  
A Fubini argument yields that a.s. 
\begin{align}\label{eq:pro3_1}
\mbox{ for } \mbox{Lebesgue a.e. } t\in(0,1), \, \qquad \limsup_{a\to 0}\frac{\mu([t,t+a]\times[\cM_t-a,\cM_t+a])}{\log(1/a)a^{2-1/\underline{\be}}} \le C 
\end{align}
Denote by $\mathcal{N}\subset[0,1]$ the Lebesgue null set such that \eqref{eq:pro3_1} fails and set $$G_{\mathcal N}= \bra{(t,M_t)\in Gr_{[0,1]}(M): t\in \mathcal N},$$ then $$\mu\left(G_{\mathcal N}\right)= \int_0^1 1_{G_{\mathcal N}}(t,M_t)\,dt=0.$$ 
This, together with Lemma \ref{densitylemma} applied to $\mu$, yields that a.s. $\cH^\ph(Gr_{[0,1]}(\cM))\ge 1/C$ with $\ph(x)=\log(1/x) x^{2-1/\underline{\be}}$. The desired lower bound for the Hausdorff dimension of  $Gr_{[0,1]}(\cM)$ follows. 
\end{preuve}

\sk
\sk
Finally we prove Theorem \ref{theograph} when $d=1$.
\begin{proof} 
The upper bound is deduced from Proposition \ref{lemmauppergraph}. Let us show the lower bound. To do so, we claim that for every $z\in\RR$ and $t_0\in[0,1)$, $\PP^z$ a.s., $$\dim_\cH Gr_{[t_0,1]}(\cM)\ge \max\Big( 1, 2-\frac{1}{\be(\cM_{t_0})}\Big) .$$
Consequently, $\PP^z$ a.s.
\begin{align*}
\dim_\cH \Big( Gr_{[0,1]}(\cM)\Big) \ge \sup_{t_0\in [0,1]\cap \mathbb{Q}} \max\Big( 1,2-\frac{1}{\be(\cM_{t_0})} \Big) \\
= \max\Big( 1, 2-\frac{1}{\sup_{t\in[0,1]}\be(\cM_t)} \Big) .
\end{align*}
As the lower bound holds uniformly in $z\in\RR$, the result follows.

It remains to prove the claim. Using  the Markov property as in beginning of the proof of Lemma \ref{lemmalower}, it suffices to show that for any $x\in\RR$, 
\begin{align}\label{eq:last_0}
\PP^x(\dim_\cH Gr_{[0,1-t_0]}(\cM)\ge 1\vee(2-1/\be(x)))=1.
\end{align}
To this end, fix $x\in\RR$ and consider the family of processes $\{M^{x,a}; a<\be(x)\}$ constructed in \eqref{coupling}. The  property \eqref{claim} satisfied by $M^{x,a}$, together with Proposition \ref{lemmalowergraph} applied to the stable-like process with index function $\be(\cdot)\vee a$, immediately implies \eqref{eq:last_0} by letting $a\to \be(x)$. The proof is now complete.

\end{proof}

\section{Discussion}
This paper deals with the a typical family of L\'evy-type processes with variable order symbol. The SDE techniques used here allow to improve previously deterministic dimension bound to a stochastic one, and in the case of stable-like processes, the new bound is actually optimal.  It would be interesting to see whether the SDE point of view allows to  get dimension bounds for more general L\'evy-type processes, in particular for those that do not have a density estimate like \eqref{transition density}.

One possible extension of this article is the study of $\dim_\cH\cM(E)$ with $E$ being any Borel set in $\rr^+$. In \cite{knopova2015range}, this question was considered and the authors obtained some bounds. The slicing and coupling argument of the present paper may certainly improve the bounds obtained in \cite{knopova2015range}.  In one dimension, under monotonicity assumptions, quite precise answer to this question is given in \cite{SeuretYang17}.

\appendix
\section*{Appendix}
Here we close the gap in the proof of Proposition \ref{proexistence}.
As the range of $\be(\cdot)$ is included in a compact set of $(0,2)$, there exists $\e>0$ such that $x\mapsto \be(x)$ is uniformly bounded from above by $2-\e$. Hence, 
\begin{align*}
 \int_{\s^{d-1}}\int_0^1 |\theta r^{1/\be(x)}|^2 \, \frac{\rd r}{r^2}\,H(\rd \theta)  &=  \int_0^1 r^{\frac{2}{2-\ep}} \,\frac{\rd r}{r^2} := C<+\infty.
\end{align*}
The growth condition is thus satisfied.

Let us now consider the Lipschitz condition. Let $x,y\in\rr^{d}$. Without loss of generality, we assume $\be(x)>\be(y)$; then
\begin{align*}
&\int_{\s^{d-1}}\int_0^1 |\theta r^{1/\be(x)}-\theta r^{1/\be(y)}|^2 \, \frac{dr}{r^2}\,H(d\theta) \\
&= \int_0^1 (r^{1/\be(x)}-r^{1/\be(y)})^2 \, \frac{dr}{r^2} \\
&= \int_0^1 r^{2/\be(x)} \( 1- e^{(\log \frac{1}{r}) \( \frac{1}{\be(x)}- \frac{1}{\be(y)}\) }\) ^2 \,\frac{dr}{r^2}.
\end{align*}
Using the inequality $1-e^{-u}\le u$ for $u>0$, this integral is bounded above by 
\begin{align*}
&\int_0^1 r^{2/\be(x)}  \( \log \frac{1}{r}\) ^2 \( \frac{1}{\be(x)}- \frac{1}{\be(y)}\) ^2  \,\frac{dr}{r^2} \\
&\le C|x-y|^2 \int_0^1 r^{2/\be(x)} (\log\frac{1}{r})^2\,\frac{dr}{r^2},
\end{align*}
where we used the Lipschitz continuity of the function $\be$. Remark that $\log(1/r)^2 \le C r^{-\ep_0}$ for every $r\in(0,1)$ where $\ep_0 = \frac{1}{2}(\frac{2}{\sup_{x\in\rr^d}\be(x)}-1)$. Hence the last integral is finite and independent of $(x,y)$. The Lipschitz condition follows.

\section*{Acknowledgements}
This work is part of my phd thesis at Universit\'e Paris-Est. I wish to thank my advisors St\'ephane Jaffard and St\'ephane Seuret for their constant support during the preparation of this paper. I also wish to thank Nicolas Fournier for stimulating discussions. I thank the anonymous referee for her/his careful reading and remarks which improve the quality of the manuscript.


\bibliographystyle{plain}
\bibliography{xyangbiblio}

\end{document}